\documentclass[a4paper,12pt]{article}
\usepackage{babel}
\usepackage[utf8]{inputenc}
\usepackage{amsthm}
\usepackage{amsfonts}
\usepackage{amssymb}
\usepackage{amsmath}
\usepackage{hyperref}
\usepackage{graphicx}
\usepackage{rotating}
\usepackage{wrapfig}
\usepackage[all]{xy}
\usepackage{pgf,tikz}
\usetikzlibrary{arrows}
\usetikzlibrary[patterns]

\newcommand{\N}{\mathbb{N}}

\newtheorem{prop}{Proposition}[section]

\newtheorem{obs}{Remark}[section]

\newtheorem{defi}[prop]{Definition}
\newtheorem{cor}[prop]{Corollary}
\newtheorem{teo}[prop]{Theorem}
\newtheorem{lema}[prop]{Lemma}
\newtheorem{ex}[prop]{Example}

\newtheoremstyle{bloco}
  {\topsep}   
  {\topsep}   
  {\upshape}  
  {0pt}       
  {\bfseries} 
  {\!\!\!\!}         
  {5pt plus 1pt minus 1pt} 
\makeatother

\theoremstyle{bloco}
\newtheorem{bloco}{}

\title{Automatic continuity for vector spaces with linear topology}
\author{Samuel Quirino and Lucas H. R. de Souza}

\begin{document}

\DeclareGraphicsExtensions{.pdf,.jpg,.mps,.png,}

\maketitle

\begin{abstract}In this paper we classify all topological vector spaces with linear topology with the property that all algebraic automorphisms are continuous. Moreover, we prove some properties of these spaces.
\end{abstract}

\textbf{Keywords} \ Linear topology $\cdot$ automatic continuity $\cdot$ codimension topology  cardinal $\cdot$ isomorphisms

\

\textbf{Mathematics Subject Classification (2020)} \ Primary 54F65 $\cdot$ 46A19  Secondary 46H40 $\cdot$ 54A10 $\cdot$ 54A25


\

\textbf{Data availability statement} \ There is no data related to this paper.

\


\section*{Introduction}

In \cite{So9} there is a classification of all topological spaces $X$ with the property that every bijection from $X$ to itself is continuous. These spaces are precisely the spaces with the co-$\kappa$ topology, i.e. fixed the cardinal $\kappa$, the closed sets of the space are the sets with cardinality less than $\kappa$. Similarly, in Functional Analysis there is the concept of automatic continuity, i. e. when a given algebraic homomorphism between  Banach algebras is already continuous (see \cite{Dal} for the basics of this theory). For example, there is this strong result, due to Johnson, which says that if $f: A \rightarrow B$ is a homomorphism of Banach algebras and $B$ is semisimple, then $f$ is continuous (Theorem 5.1.5 of \cite{Dal}).

In this paper we consider topological vector spaces with linear topology, i.e. a topology where the neighborhood filter of $0$ is generated by open vector subspaces. In the same spirit as the one given above, we classify all vector spaces with linear topology with the property that all algebraic automorphisms are continuous.

Let $V$ be a vector space and $\kappa$ be a cardinal with $\kappa = 1$ or $\kappa \geqslant \aleph_{0}$. The $\kappa$-codimension topology of $V$ is the topology induced by the filter generated by all vector subspaces with codimension less than $\kappa$. With this topology, $V$ is a topological vector space and the topology, by construction, is linear. We say that $V$ has codimension topology if it has the $\kappa$-codimension topology for some cardinal $\kappa$. Let $Aut(V)$ be the set of algebraic automorphisms of $V$ (i.e. isomorphisms as vector spaces) and $CAut(V)$ is its subset given by continuous maps. Then we have our main result:

\begin{bloco}{\textbf{Theorem \ref{theoremone}}} \textit{Let $V$ be a topological vector space with linear topology. Then $V$ has a codimension topology if and only if $Aut(V) = CAut(V)$.  }
\end{bloco}

This theorem is analogous to Theorem 1 of \cite{So9}. If $V$ and $V'$ are topological vector spaces, then we define $Hom(V,V')$ as the set of linear maps from $V$ to $V'$ and $CHom(V,V')$ the set of continuous linear maps from $V$ to $V'$. We proved also the following two propositions:

\begin{bloco}{\textbf{Proposition \ref{continuitytosmallspaces}}} \textit{Let $V$ and $V'$ be topological vector spaces with linear topology such that $V$ has the $\kappa$-codimension topology. If $dim \ V' < \kappa$, then
$CHom(V,V') = Hom(V,V')$.}    
\end{bloco}

\begin{bloco}{\textbf{Proposition \ref{continuitybetweenboundedspaces}}} \textit{Let $V$ and $V'$ be topological vector spaces such that $V$ and $V'$ have the $\kappa$ and $\kappa'$-codimension topologies, respectively. If $\kappa' \leqslant \kappa$, then $CHom(V,V') = Hom(V,V')$.} 
\end{bloco}

Let $V = V'$ and both spaces endowed with the $\kappa$-codimension topology. Then, by \textbf{Proposition \ref{continuitybetweenboundedspaces}} $CHom(V,V) = Hom(V,V)$. This phenomenon is different from the one in topological spaces, where the spaces with the property that all maps from $X$ to itself are continuous maps are the discrete and the trivial spaces (Corollary 1 of \cite{So9}). 

Also, we show a series of properties of these topological vector spaces. With this, we construct an arbitrarily big family of topological vector spaces with linear topology which are all non isomorphic as topological vector spaces and with the property that all of them have the same profinite-dimensional completion and the same dual space (\textbf{Corollary \ref{bigfamily}}).

At the end, we consider analogous versions of Johnson's Theorem for topological algebras with linear topology. We get that, in general, such statement fails (\textbf{Example \ref{example1}}) and if we restrict it to pseudocompact algebras, then the statement also fails (\textbf{Example \ref{example2}}). 

\section*{Acknowledgements}

We would like to thank Geraldo Botelho for the discussion about Automatic Continuity, which was the starting point for the idea of this paper. We also thank John MacQuarrie for the discussion about the paper. In particular, he showed us \textbf{Proposition \ref{linearcompacthasdense}}, which helped a lot with the comprehension of the spaces that appear here.

\section{Preliminaries}

\subsection{Linear algebra and cardinals}

If $V$ is a $k$-vector space and $W < V$, then the codimension of $W$ in $V$ is given by $dim \ V/W$ and denoted as $codim_{V}W$. If the ambient space is clear from its context, then we denote it simply as $codim \ W$.

\begin{prop}\label{codimensionofintersection}Let $V$ be a vector space, $W,W' < V$ and $\kappa$ a cardinal number satisfying $\kappa = 1$ or $\kappa \geqslant \aleph_{0}$. If $codim \ W < \kappa$ and $codim \ W' < \kappa$, then $codim \ W \cap W' < \kappa$.
\end{prop}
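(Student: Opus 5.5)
The plan is to bound $codim \ W \cap W'$ by $codim \ W + codim \ W'$ using an injective linear map, and then treat the two allowed kinds of $\kappa$ separately.

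\textbf{Step 1: an injection.} Consider the linear map
$$\varphi : V/(W \cap W') \longrightarrow V/W \times V/W', \qquad v + (W\cap W') \mapsto (v + W, v + W').$$
It is well defined, since $W \cap W'$ is contained in both $W$ and $W'$. It is injective, because if $v \in W$ and $v \in W'$ then $v \in W \cap W'$. Hence
$$codim \ W \cap W' = dim \ V/(W\cap W') \leqslant dim\,(V/W \times V/W') = codim \ W + codim \ W'.$$
Here the inequality of dimensions for an injective linear map holds for arbitrary cardinals: extend the image of a basis to a basis of the codomain. Also, the dimension of a finite product is the cardinal sum of the dimensions.

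\textbf{Step 2: the case $\kappa = 1$.} The hypotheses say $codim \ W = codim \ W' = 0$, so $W = W' = V$. Then $W \cap W' = V$ has codimension $0 < 1$.

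\textbf{Step 3: the case $\kappa \geqslant \aleph_0$.} Let $\lambda = \max(codim \ W, codim \ W') < \kappa$.
\begin{itemize}
\item If $\lambda$ is finite, the sum $codim \ W + codim \ W'$ is finite, hence less than $\aleph_0 \leqslant \kappa$.
\item If $\lambda$ is infinite, cardinal arithmetic gives $codim \ W + codim \ W' = \lambda < \kappa$.
\end{itemize}
In both cases Step 1 yields $codim \ W \cap W' < \kappa$.

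The only step needing any care is Step 3. There we use that $\kappa$ is infinite, so that sums of two cardinals below $\kappa$ stay below $\kappa$. This is exactly why the values $2 \leqslant \kappa < \aleph_0$ are excluded: for example, with $\kappa = 2$ two distinct hyperplanes have codimension $1 < 2$ each, while their intersection has codimension $2$.
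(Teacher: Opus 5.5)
Your proof is correct, and its overall structure matches the paper's: first show $codim \ W \cap W' \leqslant codim \ W + codim \ W'$, then apply cardinal arithmetic. The difference is in how you get the subadditivity.

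The paper uses the isomorphism theorems. From $(W+W')/W' \cong W/(W\cap W')$ it gets $dim \ W/(W\cap W') \leqslant codim \ W'$. From $V/W \cong (V/W\cap W')/(W/W\cap W')$ it gets the exact identity
$$codim \ W\cap W' = codim \ W + dim \ W/(W\cap W').$$
Your diagonal embedding $V/(W\cap W') \hookrightarrow V/W \times V/W'$ gives the inequality in one line, without splitting any quotient. The paper's route is slightly longer but produces an exact formula rather than only a bound.

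Your finite/infinite case split in Step 3 spells out the cardinal arithmetic that the paper compresses into ``$< \kappa + \kappa = \kappa$''. Your closing example with two distinct hyperplanes is the same as the paper's remark explaining why finite $\kappa > 1$ is excluded.
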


\begin{obs}This result certainly is well known. At least for the case $\kappa = \aleph_{0}$, it  appears as Exercise 13 of page 91 of \cite{Rom}. However, the standard proof does not depend on $\kappa$, if $\kappa \geqslant \aleph_{0}$. We write its proof down here, for the sake of completeness.
\end{obs}

\begin{proof}For $\kappa = 1$ the result is immediate. Suppose that $\kappa \geqslant \aleph_{0}$. We have that $dim \ (W+W')/W' \leqslant dim \ V/W' = codim_{V}W' < \kappa$. One of the Isomorphism Theorems says that $$(W+W')/W' \cong W/(W\cap W').$$ So $dim \ W/(W\cap W') < \kappa$. Another Isomorphism Theorem gives us that $$V/W \cong (V/W\cap W')/(W/W\cap W'),$$ which implies that $V/W\cap W' \cong (V/W) \oplus (W/W\cap W')$ and then $$dim \ (V/W\cap W') = dim \ (V/W) + dim \ (W/W\cap W').$$ So $codim_{V} W\cap W' = dim \ (V/W\cap W')  = dim \ (V/W) + dim \ (W/W\cap W') < \kappa + \kappa = \kappa$.
\end{proof}

\begin{prop}\label{preimagesandcodimension}Let $f: V \rightarrow V'$ be a linear map, $\kappa$ a cardinal and $W' < V$ such that $codim \ W' < \kappa$. Then $codim \ f^{-1}(W) < \kappa$. 
\end{prop}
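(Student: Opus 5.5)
The statement has a typo: it should read $W' < V'$ with $codim_{V'} W' < \kappa$, and the conclusion is $codim_{V} f^{-1}(W') < \kappa$. I will prove it in that form. The approach is to embed the quotient $V/f^{-1}(W')$ injectively into $V'/W'$ and compare dimensions.

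Let $\pi : V' \rightarrow V'/W'$ be the canonical projection. Consider the composite linear map $g = \pi \circ f : V \rightarrow V'/W'$. Its kernel is exactly
$$\{v \in V : f(v) \in W'\} = f^{-1}(W').$$
In particular $f^{-1}(W')$ is a subspace of $V$, since it is the kernel of a linear map. By the First Isomorphism Theorem, $g$ induces an injective linear map
$$\bar g : V/f^{-1}(W') \rightarrow V'/W',$$
whose image is $g(V)$. Hence $V/f^{-1}(W') \cong g(V)$, and $g(V)$ is a subspace of $V'/W'$.

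The next step uses the fact that the dimension of a subspace never exceeds the dimension of the ambient space. This holds for arbitrary cardinals: a basis of $g(V)$ is linearly independent in $V'/W'$, so it extends to a basis of $V'/W'$. Therefore
$$codim_{V} f^{-1}(W') = dim \ g(V) \leqslant dim \ V'/W' = codim_{V'} W' < \kappa.$$

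No restriction on $\kappa$ is needed, and Proposition \ref{codimensionofintersection} is not used. The only step that is not purely formal is the monotonicity of dimension for infinite-dimensional spaces. That follows from extending a linearly independent set to a basis (Zorn's lemma) together with the invariance of dimension, both of which are standard.
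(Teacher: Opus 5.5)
Your proof is correct and matches the paper's argument: the kernel of $\pi \circ f$ is $f^{-1}(W')$, and the First Isomorphism Theorem gives an injection $V/f^{-1}(W') \rightarrow V'/W'$, which yields the dimension bound. You also correctly repaired the statement's typos ($W' < V'$ and $f^{-1}(W')$ in place of $f^{-1}(W)$), and these are exactly the readings the paper's own proof uses.
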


\begin{proof}Take $\pi: V' \rightarrow V'/W'$ as the quotient map. Then $\ker \ \pi \circ f = f^{-1}(W')$. Then the First Isomorphism Theorem gives us that there is an injective homomorphism $V/f^{-1}(W') \rightarrow V'/W'$, which implies that $codim \ f^{-1}(W') = dim \ V/f^{-1}(W') \leqslant dim \ V'/W' = codim \ W' < \kappa$.
\end{proof}

Here we establish a notation: if $\kappa$ is a cardinal, then $\kappa^{+}$ is its cardinal successor. In addition, any subspace of a vector space means a vector subspace. We say subset if it may not have structure of vector subspace.

\subsection{Vector spaces with linear topology}

On this entire paper, we fix a discrete field $k$.

\begin{prop}(Theorem 12.3 of \cite{Wa})Let $V$ be a vector space and $\mathcal{N}$ a filter on $V$ which satisfies the following properties:

\begin{enumerate}
    \item For every $U \in \mathcal{N}$, $0 \in U$.
    \item For each $U \in \mathcal{N}$, there exists $U' \in \mathcal{N}$ such that $U'+U' \subseteq U$.
    \item For each $U \in \mathcal{N}$, there exists $U' \in \mathcal{N}$ such that $U' \subseteq - U$.
    \item For each $U \in \mathcal{N}$ and $a \in k$, there exists $U' \in \mathcal{N}$ such that $aU' \subseteq U$.
\end{enumerate}

Then there is a unique topology on $V$ such that $V$ is a topological vector space and $\mathcal{N}$ is the neighborhood filter of $0$.
\end{prop}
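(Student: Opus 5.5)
The plan is to build the topology directly from translates of the members of $\mathcal{N}$. I declare a subset $O \subseteq V$ to be open when for every $x \in O$ there is some $U \in \mathcal{N}$ with $x + U \subseteq O$. Write $\mathcal{T}$ for this family. The argument then has four parts: check that $\mathcal{T}$ is a topology, identify its neighborhood filter at $0$ (and at every point), show that addition and scalar multiplication are continuous, and prove uniqueness.

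\textbf{$\mathcal{T}$ is a topology.} The empty set and $V$ lie in $\mathcal{T}$, and unions are clearly preserved. For a finite intersection $O_1 \cap O_2$, a point $x$ has $U_1, U_2 \in \mathcal{N}$ with $x + U_i \subseteq O_i$, and $U_1 \cap U_2 \in \mathcal{N}$ because $\mathcal{N}$ is a filter.

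\textbf{Neighborhood filter.} This is the main obstacle. I must show that each $x + U$, with $U \in \mathcal{N}$, is a neighborhood of $x$, i.e. that it contains an open set containing $x$. Following the standard argument, I set
\[
O = \{\, y \in V : \exists\, U' \in \mathcal{N} \text{ with } y + U' \subseteq x + U \,\}.
\]
\begin{itemize}
\item $x \in O$, taking $U' = U$.
\item $O \subseteq x + U$, since $0 \in U'$ by condition 1.
\item $O$ is open. Given $y \in O$ with witness $U'$, condition 2 gives $U''$ with $U'' + U'' \subseteq U'$. Then for every $z \in y + U''$ we get $z + U'' \subseteq y + U'' + U'' \subseteq y + U' \subseteq x + U$, so $z \in O$. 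Hence $y + U'' \subseteq O$.
\end{itemize}
Conversely, every neighborhood of $x$ contains some $x + U$ by the definition of $\mathcal{T}$. So the neighborhood filter at $x$ is exactly $x + \mathcal{N}$, and at $0$ it is $\mathcal{N}$.

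\textbf{Continuity of the operations.}
\begin{itemize}
\item \emph{Addition.} At a point $(x, y)$, a basic neighborhood of $x + y$ is $x + y + U$. Choose $U'$ with $U' + U' \subseteq U$ (condition 2); then $(x + U') + (y + U') \subseteq x + y + U$.
\item \emph{Negation.} This is continuous by condition 3, which matters when $k$ has characteristic different from 2. It is in any case a special case of scalar multiplication by $a = -1$.
\item \emph{Scalar multiplication.} Since $k$ is discrete, continuity of $k \times V \to V$ reduces to continuity of $v \mapsto av$ for each fixed $a$. At $x$, condition 4 gives $U'$ with $aU' \subseteq U$, so $a(x + U') \subseteq ax + U$.
\end{itemize}
Thus $(V, \mathcal{T})$ is a topological vector space.

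\textbf{Uniqueness.} In any topological vector space translations are homeomorphisms, so the neighborhood filter at $x$ is $x$ plus the neighborhood filter at $0$. A topology is determined by its neighborhood filters at all points. Hence two vector-space topologies that both have $\mathcal{N}$ as the neighborhood filter of $0$ coincide.
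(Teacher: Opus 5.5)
Your proof is correct and complete. Defining open sets through translates $x+U$, using condition 2 to show each $x+U$ is a neighborhood of $x$, using discreteness of $k$ to reduce scalar multiplication to the maps $v\mapsto av$, and getting uniqueness from translation-invariance is the standard argument behind the cited Theorem 12.3, which the paper quotes without proof (a minor point: condition 3 is just the case $a=-1$ of condition 4, so your remark about the characteristic is unnecessary).
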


The differences from the above proposition and the one given in the reference is from the fact that, in our case, the base field $k$ is discrete, which simplifies some conditions of the filter.

\begin{defi}Let $V$ be a topological vector space. We say that $V$ has linear topology if the neighborhood filter of $0$ is generated by open vector subspaces of $V$.
\end{defi}

\begin{defi}Let $V$ be a topological vector space with linear topology. We say that $V$ is linearly compact if it is an inverse limit of finite dimensional vector spaces, with the topology given by the inverse limit.
\end{defi}

Proposition 24.4 of \cite{BeH} gives us many equivalent definitions of the definition above. Some of them are necessary for this paper.  

\begin{prop}\label{linearcompacthasdense} Let $V$ be an infinite dimensional linearly compact vector space. Then there exists a subspace $W < V$ such that $W$ is not open in $V$ and $codim \ W = 1$.
\end{prop}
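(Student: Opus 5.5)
We need a codimension-one subspace $W$ of $V$ that is not open, i.e.\ a linear functional $\varphi: V \to k$ that is not continuous, with $W = \ker \varphi$. Since $W$ has codimension $1$, $W$ is open exactly when $\varphi$ is continuous (with $k$ discrete). So the statement reduces to showing that an infinite dimensional linearly compact space has a discontinuous linear functional, and the plan is a dimension count.

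\textbf{Step 1.} Write $V = \varprojlim_{i \in I} V_i$ with each $V_i$ finite dimensional and discrete. Replacing each $V_i$ by the image of $V$, we may assume the projections $\pi_i: V \to V_i$ are surjective. The open subspaces $\ker \pi_i$ form a base at $0$. A continuous functional $\varphi$ has open kernel, so it contains some $\ker \pi_i$, and therefore $\varphi$ factors through $V_i$. Hence the continuous dual satisfies
\[ V^{*}_{c} = \varinjlim V_i^{*} = \bigcup_i \pi_i^{*}(V_i^{*}). \]
We are then left to show that $V^{*}_{c} \neq V^{*}$, the full algebraic dual.

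\textbf{Step 2 (main obstacle).} To compare sizes, I would use the standard fact that a linearly compact space is topologically isomorphic to a product $k^{J}$ of one-dimensional discrete spaces (one of the equivalent characterizations in Proposition 24.4 of \cite{BeH}). Since $V$ is infinite dimensional, $J$ is infinite. Under this identification the continuous functionals are exactly those that depend on finitely many coordinates, so $V^{*}_{c} \cong k^{(J)}$.

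\textbf{Step 3.} Compare dimensions. We have $\dim V^{*}_{c} = |J|$. On the other hand, $V = k^{J}$ has dimension at least $2^{|J|}$, by the Erd\H{o}s--Kaplansky theorem: $\dim k^{J} = |k|^{|J|} \geq 2^{|J|}$. Therefore
\[ \dim V^{*} \geq \dim V > |J| = \dim V^{*}_{c}, \]
so some $\varphi \in V^{*}$ is not continuous, and $W = \ker \varphi$ works.

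\textbf{Alternative avoiding cardinal arithmetic.} Inside $k^{J}$, take the dense proper subspace $k^{(J)}$, which is proper because $J$ is infinite. Extend it to a hyperplane $W \supseteq k^{(J)}$. This $W$ is dense. If $W$ were open it would be closed, since open subspaces are closed, and so $W = V$, a contradiction. This construction is cleaner and is the one I would present.

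The main obstacle is justifying the product representation $V \cong k^{J}$, or at least the existence of a dense proper subspace. I would cite \cite{BeH} for this rather than prove it.
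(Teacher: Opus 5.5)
Your alternative argument is essentially the paper's proof: identify $V$ with $\prod_{i\in\Gamma} k$ for an infinite $\Gamma$, extend the dense proper subspace $\bigoplus_{i\in\Gamma} k$ to a hyperplane $W$, and conclude that $W$ is dense and hence not open, because open subspaces are closed (you correctly say ``dense'' at the step where the paper's text has the slip ``open''). Your Erd\H{o}s--Kaplansky dimension count in Steps 1--3 is also a valid proof, just heavier than needed.
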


\begin{obs}This proposition is an analogous version of the construction given in page 5 of \cite{IM} for pseudocompact algebras and its proof is essentially the same. We write it down here for the sake of completeness.
\end{obs}

\begin{proof}Proposition 24.4 of \cite{BeH} says that $V$ is isomorphic, as topological vector spaces, to $\prod_{i \in \Gamma} k$, for some infinite $\Gamma$. Then the subspace given by $\bigoplus_{i \in \Gamma} k$ is dense on $\prod_{i \in \Gamma} k$. Take $W$ a subspace of $\prod_{i \in \Gamma} k$ such that $\bigoplus_{i \in \Gamma} k < W$ and $codim \ W = 1$ . Then $W$ is also open in $\prod_{i \in \Gamma} k$. So $W$ cannot be open in $\prod_{i \in \Gamma} k$, since any open subspace must also be closed (Corollary 2.3 of \cite{Wa}).
\end{proof}

\begin{defi}Let $V$ be a topological vector space and $\mathcal{N}(0)$ the neighborhood filter of $0$. We denote by $\hat{V}$, the Cauchy completion of $V$, the space, with the inverse limit topology, given by $\lim\limits_{\longleftarrow} \{V/W: W \in \mathcal{N}(0), W \text{ is an open subspace}\}$.
\end{defi}

In \cite{Wa} there is the equivalence between the definition above and the usual definition using uniform spaces.

\begin{defi}Let $V$ be a topological vector space. Its profinite-dimension completion is the limit $\lim\limits_{\longleftarrow}\{V/W: W \text{ is closed}, codim \ W < \aleph_{0}\}$.
\end{defi}

Finally, the topological dual of a vector space with linear topology is defined as:

\begin{defi}Let $V$ be a vector space with linear topology. We define $V_{\kappa}^{\circledast} = CHom(V_{\kappa},k)$, the set of continuous linear maps, together with the topology given by the neighborhood filter of $0$ that is generated by the set $\{W^{\perp}: W < V, W \text{ is linearly compact}\}$, where $W^{\perp} = \{f \in V_{\kappa}^{\circledast}: f(W) = 0\}$.
\end{defi}

 \section{Codimension topology}

\begin{defi}Let $V$ be a vector space and $\kappa$ a cardinal, with $\kappa = 1$ or $\kappa \geqslant \aleph_{0}$. The $\kappa$-codimension topology of $V$ is the topology whose neighborhood filter of the point $0$ is generated by the set $\{W < V: codim \ W < \kappa\}$. We denote by $\tau_{V,\kappa}$ such topology. We say that $V$ has codimension topology if it has the $\kappa$-codimension topology for some cardinal $\kappa$.
\end{defi}

By \textbf{Proposition \ref{codimensionofintersection}}, the set $\{W < V: codim \ W < \kappa\}$ is closed by intersections, which implies that it is a base of a neighborhood filter of the point $0 \in V$ that generates a topology such that $V$ is a topological vector space with linear topology. Note that $\kappa$ cannot be finite, except if it is equal to $1$, since we do not have control on the codimension of the intersections. For example, if $V$ has a basis given by $\{b_{1},b_{2}\}$, then $\langle b_{1}\rangle$ and $\langle b_{2}\rangle$ have codimension $1$ in $V$, but $0 = \langle b_{1}\rangle \cap \langle b_{2}\rangle$ has codimension $2$.

We have two special cases: the $1$-codimension topology is the trivial topology and the $\kappa$-codimension topology, with $\kappa \geqslant dim \ V$ is the discrete topology.

\begin{prop}\label{differentspaces}Let $V$ be a vector space and $\kappa,\kappa'$ cardinals such that $\kappa \leqslant \kappa' \leqslant dim \ V$. Then $\tau_{V,\kappa}$ is coarser than $\tau_{V,\kappa'}$. Moreover, if $\kappa \neq \kappa'$, then the spaces $(V,\tau_{V,\kappa})$ and $(V,\tau_{V,\kappa'})$ are not isomorphic as topological vector spaces.
\end{prop}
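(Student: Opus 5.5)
The coarseness claim is immediate from the definitions, and the non-isomorphism claim will come from a subspace that is open in one topology but cannot have an open image in the other.

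\emph{Coarseness.} If $\kappa \leqslant \kappa'$, every subspace $W<V$ with $codim \ W<\kappa$ also satisfies $codim \ W<\kappa'$. So every basic neighbourhood of $0$ in $\tau_{V,\kappa}$ is a basic neighbourhood of $0$ in $\tau_{V,\kappa'}$. Both are topological vector space topologies, hence translation invariant, so $\tau_{V,\kappa}\subseteq\tau_{V,\kappa'}$.

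\emph{Non-isomorphism.} Assume $\kappa<\kappa'\leqslant dim \ V$ and suppose, for contradiction, that $f:(V,\tau_{V,\kappa})\rightarrow(V,\tau_{V,\kappa'})$ is an isomorphism of topological vector spaces. Then $f$ is a linear bijection, and $f^{-1}$ is continuous, so $f$ sends open subspaces to open subspaces. The first step is to note that in the $\kappa$-codimension topology a subspace is open exactly when it contains a subspace of codimension $<\kappa$. Using the formula $dim \ (V/W')=dim \ (V/W)+dim \ (W/W')$ from the proof of Proposition \ref{codimensionofintersection}, this is equivalent to the subspace itself having codimension $<\kappa$. Since a linear bijection preserves codimension, the open subspaces of $(V,\tau_{V,\kappa})$ and of $(V,\tau_{V,\kappa'})$ would then be exactly the subspaces of codimension $<\kappa$ and $<\kappa'$ respectively.

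Next I produce a subspace witnessing the difference. The case $\kappa=1$ is handled separately: then $\kappa'\geqslant\aleph_0$ and $V\neq 0$, so $\tau_{V,\kappa}$ is trivial while $\tau_{V,\kappa'}$ is not, because $\{0\}$ is not open but any subspace of codimension $1$ is. Otherwise, since $\kappa$ is a cardinal with $\kappa<\kappa'\leqslant dim \ V$, I choose a basis $B$ of $V$ and a subset $C\subseteq B$ with $|C|=\kappa$, which is possible because $\kappa\leqslant dim \ V$. Let $W=\langle B\setminus C\rangle$, so that $codim \ W=\kappa$. Then $W$ is open in $\tau_{V,\kappa'}$, since $\kappa<\kappa'$. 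Its preimage $f^{-1}(W)$ has the same codimension $\kappa\not<\kappa$, so it is not open in $\tau_{V,\kappa}$. This contradicts the continuity of $f$.

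The main obstacle is the first step of the non-isomorphism part, namely justifying that openness of a subspace is characterized by its codimension. The delicate point is that a subspace containing one of codimension $<\kappa$ itself has codimension $<\kappa$. This holds because its codimension is bounded by the smaller subspace's codimension, via the dimension additivity above together with the cardinal inequality $dim \ (V/W)\leqslant dim \ (V/W')$.
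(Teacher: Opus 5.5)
Your proof is correct and follows the paper's argument. Coarseness comes from the inclusion of the generating families. Non-isomorphism comes from pulling back a subspace $W$ with $\kappa \leqslant codim \ W < \kappa'$ and using that a linear bijection preserves codimension, so $f^{-1}(W)$ cannot be open in $\tau_{V,\kappa}$. You take codimension exactly $\kappa$ and, unlike the paper, justify that such a $W$ exists via a basis. Your separate treatment of $\kappa = 1$ is harmless but unnecessary, since your general construction with $|C| = 1$ already covers that case.
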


\begin{proof}We have that $\{W < V: codim \ W < \kappa\} \subseteq \{W < V: codim \ W < \kappa'\}$, which implies that $\tau_{V,\kappa}$ is coarser than $\tau_{V,\kappa'}$.

Let $f: (V,\tau_{V,\kappa}) \rightarrow (V,\tau_{V,\kappa'})$ be an isomorphism of topological vector spaces. Let $W' < V$ such that $\kappa \leqslant codim \ W' < \kappa'$. Then $f^{-1}(W') \in \tau_{V,\kappa}$. So there exists $W \in \tau_{V,\kappa}$ such that $codim \ W < \kappa$ and $W < f^{-1}(W')$, which implies that $codim \ W' = codim \ f^{-1}(W') < \kappa$, contradicting the definition of $W'$. Thus $(V,\tau_{V,\kappa})$ and $(V,\tau_{V,\kappa'})$ are not isomorphic as topological vector spaces.
\end{proof}

From now on until \textbf{Theorem \ref{theoremone}}, we consider another property of topological vector spaces: let $V$ be a topological vector space with linear topology such that $Aut(V) = CAut(V)$.

\begin{lema}\label{equaldimandcodim} Let $W$ be an open subspace of $V$. If $W' < V$ such that $dim \ W' = dim \ W$ and $codim \ W' = codim \ W$, then $W'$ is open in $V$. 
\end{lema}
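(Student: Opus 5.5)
The idea is to construct an algebraic automorphism $f$ of $V$ with $f(W) = W'$. Since $Aut(V) = CAut(V)$, the inverse $f^{-1}$ is also an automorphism and hence continuous. Then $W' = f(W) = (f^{-1})^{-1}(W)$ is the preimage of an open set under a continuous map, so it is open.

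To build $f$, I would choose complements. Pick subspaces $C$ and $C'$ of $V$ with $V = W \oplus C$ and $V = W' \oplus C'$; these exist by extending a basis of $W$ (respectively $W'$) to a basis of $V$. Then $C \cong V/W$ and $C' \cong V/W'$, so
$$\dim C = codim \ W = codim \ W' = \dim C'.$$
Together with the hypothesis $\dim W = \dim W'$, this gives linear isomorphisms $g: W \rightarrow W'$ and $h: C \rightarrow C'$, obtained by matching bases of equal cardinality. Define $f = g \oplus h$, i.e. $f(w + c) = g(w) + h(c)$ for $w \in W$ and $c \in C$. This is a linear bijection $V \rightarrow V$, because it carries a basis of $V$ adapted to $W \oplus C$ onto a basis adapted to $W' \oplus C'$. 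By construction $f(W) = W'$.

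The only point needing care is the direction of continuity. We need $f^{-1}$ to be continuous, which holds because $f^{-1} \in Aut(V)$; continuity of $f$ alone would not suffice. With that settled, $W' = (f^{-1})^{-1}(W)$ is open.

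No real obstacle is expected. The whole argument rests on two facts: the existence of complements (which uses the axiom of choice in the infinite-dimensional case), and the fact that vector spaces of equal dimension are isomorphic.
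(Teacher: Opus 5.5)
Your proposal is correct and matches the paper's proof. The paper picks complements $U, U'$, builds an automorphism $h$ from isomorphisms $W' \to W$ and $U' \to U$ so that $h(W') = W$, and concludes that $W' = h^{-1}(W)$ is open; this is exactly your argument with $h = f^{-1}$.
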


\begin{proof}Let $U, U' < V$ such that we can decompose $V$ as the inner direct sums $V = W \oplus U = W' \oplus U'$. We have that $codim \ W = dim \ U$ and $codim \ W' = dim \ U'$. Since $codim \ W = codim \ W'$, then $dim \ U = dim \ U'$. Let then $f: W' \rightarrow W$ and $g: U' \rightarrow U$ be isomorphisms. These maps induce an isomorphism $h: V \rightarrow V$ such that $h(W') = W$. The map $h$ is continuous by hypothesis and $W$ is open in $V$, which implies that $W' = h^{-1}(W)$ is also open in $V$.
\end{proof}

\begin{lema}\label{equaldim}Let $W$ be an open subspace of $V$ such that $dim \ W < dim \ V$. If $W' < V$ such that $dim \ W' = dim \ W$, then $W'$ is open in $V$. 
\end{lema}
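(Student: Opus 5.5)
The plan is to reduce this to Lemma \ref{equaldimandcodim} by locating, inside $W \cap W'$ or near $W$, an open subspace whose dimension and codimension agree with those of some subspace contained in $W'$. Since $dim \ W < dim \ V$ and $V$ is infinite dimensional in the relevant case (if $dim \ V$ is finite then $dim \ W' = dim \ W$ forces $codim \ W' = codim \ W$, and Lemma \ref{equaldimandcodim} applies directly), we have $dim \ V = dim \ W + codim \ W = \max(dim \ W, codim \ W)$. So $codim \ W = dim \ V$ and likewise $codim \ W' = dim \ V$, because $dim \ W' = dim \ W < dim \ V$. Hence $W$ and $W'$ have the same dimension and the same codimension, and Lemma \ref{equaldimandcodim} gives that $W'$ is open.

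The key steps, in order, are as follows. First, dispose of the finite-dimensional case as above, using rank--nullity: $codim \ W = dim \ V - dim \ W$. Second, for $dim \ V$ infinite, use cardinal arithmetic. Choose a complement $U$ with $V = W \oplus U$, so that $dim \ V = dim \ W + dim \ U$. Since the sum of two cardinals, at least one of them infinite, equals their maximum, and $dim \ W < dim \ V$, we must have $dim \ U = dim \ V$, that is, $codim \ W = dim \ V$. The identical argument applied to $W'$ gives $codim \ W' = dim \ V$. Third, invoke Lemma \ref{equaldimandcodim}; this requires the standing hypothesis $Aut(V) = CAut(V)$, which is in force.

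The only delicate point is the cardinal arithmetic in the second step. We need $dim \ V = \max(dim \ W, codim \ W)$ when $dim \ V$ is infinite, which holds because $\kappa + \lambda = \max(\kappa, \lambda)$ whenever one of them is infinite. We also need the fact that in a finite-dimensional $V$ the codimension is determined by the dimension. Neither step seems to pose a real obstacle.
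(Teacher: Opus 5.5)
Your proposal is correct and follows the paper's proof essentially verbatim. In the finite case you use rank--nullity, and in the infinite case $dim \ V = \max\{dim \ W, codim \ W\}$, to get $codim \ W' = codim \ W$, and then you apply \textbf{Lemma \ref{equaldimandcodim}}; your opening remark about finding an open subspace inside $W \cap W'$ is never used and can be dropped.
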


\begin{proof}Suppose that $dim \ V < \aleph_{0}$. We have that $dim \ V = dim \ W + codim \ W =  dim \ W' + codim \ W'$ and $dim \ W' = dim \ W$, which implies that  $codim \ W' = codim \ W$. Since $W$ is open in $V$, then, by \textbf{Lemma \ref{equaldimandcodim}}, $W'$ is open in $V$.

Suppose that $dim \ V \geqslant \aleph_{0}$. We have that $dim \ V = dim \ W + codim \ W = \max\{dim \ W, codim \ W\}$ (because $dim \ V \geqslant \aleph_{0}$). Since $dim \ W < dim \ V$, then $codim \ W = dim \ V $. Analogously, $codim \ W' = dim \ V $. So $dim \ W' = dim \ W$ and $codim \ W' = codim \ W$, which implies, by \textbf{Lemma \ref{equaldimandcodim}}, that $W'$ is open in $V$.
\end{proof}

\begin{lema}\label{equalcodim}Let $W$ be an open subspace of $V$ such that $codim \ W < dim \ V$. If $W' < V$ such that $codim \ W' = codim \ W$, then $W'$ is open in $V$. 
\end{lema}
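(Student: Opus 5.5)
The plan is to mirror the proof of \textbf{Lemma \ref{equaldim}}: we show that the hypothesis $codim \ W' = codim \ W$ forces $dim \ W' = dim \ W$, and then apply \textbf{Lemma \ref{equaldimandcodim}}. We split into the cases $dim \ V < \aleph_{0}$ and $dim \ V \geqslant \aleph_{0}$.

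\emph{Finite-dimensional case.} If $dim \ V < \aleph_{0}$, we use $dim \ V = dim \ W + codim \ W = dim \ W' + codim \ W'$. All these quantities are finite natural numbers, so the equality $codim \ W' = codim \ W$ can be cancelled and gives $dim \ W' = dim \ W$. \textbf{Lemma \ref{equaldimandcodim}} then shows that $W'$ is open.

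\emph{Infinite-dimensional case.} Suppose $dim \ V \geqslant \aleph_{0}$. As in the proof of \textbf{Lemma \ref{equaldim}}, cardinal arithmetic gives
$$dim \ V = dim \ W + codim \ W = \max\{dim \ W, codim \ W\}.$$
Since $codim \ W < dim \ V$, the maximum must be attained by $dim \ W$, so $dim \ W = dim \ V$. Because $codim \ W' = codim \ W < dim \ V$, the same argument applied to $W'$ yields $dim \ W' = dim \ V$. Hence $dim \ W' = dim \ W$ and $codim \ W' = codim \ W$, and \textbf{Lemma \ref{equaldimandcodim}} gives that $W'$ is open in $V$.

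The argument is routine. The only step needing care is the cardinal identity $\kappa + \lambda = \max\{\kappa,\lambda\}$ when one of the summands is infinite. This holds here because $dim \ V = dim \ W + dim \ U$ for any complement $U$ of $W$, and $dim \ V$ is infinite. It is also worth noting that in the finite-dimensional case the hypothesis $codim \ W < dim \ V$ is not needed at all.
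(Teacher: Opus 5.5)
Your proof is correct and is exactly what the paper intends: its proof says only ``analogous to the proof of \textbf{Lemma \ref{equaldim}}''. That is, swap the roles of dimension and codimension in the cardinal-arithmetic step to get $dim \ W' = dim \ W$ (both equal to $dim \ V$ in the infinite case), and then apply \textbf{Lemma \ref{equaldimandcodim}}.
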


\begin{proof}Analogous to the proof of \textbf{Lemma \ref{equaldim}}.
\end{proof}

\begin{lema}\label{smallercodim}Let $W$ be an open subspace of $V$ such that $dim \ W < dim \ V$. If $W' < V$ such that $codim \ W' \leqslant codim \ W$, then $W'$ is open in $V$. 
\end{lema}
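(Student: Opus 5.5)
The idea is to find an open subspace contained in $W'$, using the previous lemmas to move openness around via automorphisms. Since $V$ has a linear topology, a subspace that contains an open subspace is open, because it is a union of cosets of that open subspace. So it is enough to produce an open subspace $W''$ with $W'' < W'$.

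First I settle the dimensions. If $dim \ V < \aleph_{0}$, then $codim \ W' \leqslant codim \ W$ gives $dim \ W' \geqslant dim \ W$. Choose $W'' < W'$ with $dim \ W'' = dim \ W$; this is possible because $dim \ W \leqslant dim \ W'$. Since $dim \ W < dim \ V$, \textbf{Lemma \ref{equaldim}} shows that $W''$ is open. Hence $W'$ is open.

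Now suppose $dim \ V \geqslant \aleph_{0}$. As in the proof of \textbf{Lemma \ref{equaldim}}, the hypothesis $dim \ W < dim \ V$ forces $codim \ W = dim \ V$. Consider two cases.
\begin{enumerate}
\item If $dim \ W' \geqslant dim \ W$, pick a subspace $W'' < W'$ with $dim \ W'' = dim \ W$. Then $W''$ is open by \textbf{Lemma \ref{equaldim}}, so $W'$ is open.
\item If $dim \ W' < dim \ W$, then $dim \ W' < dim \ V$, so $codim \ W' = dim \ V$. We also have $dim \ W' + codim \ W' = dim \ V$. Choose a complement $U'$ of $W'$, so $dim \ U' = dim \ V$, and enlarge $W'$ to $W' \oplus U_{0}$ with $U_{0} < U'$ and $dim \ U_{0} = dim \ W$. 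This only enlarges $W'$, which does not help. So instead compare $W'$ with a subspace of $W$: take $W_{0} < W$ with $dim \ W_{0} = dim \ W'$. Then $W_{0}$ need not be open, and this is the obstacle.
\end{enumerate}

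To handle the second case I use automorphisms directly. Build an automorphism $h$ with $h(W) \subseteq W'$, or rather with $h(W') \supseteq W$, so that $W' = h^{-1}(h(W'))$ contains $h^{-1}(W)$, which is open by continuity. For this I need an injection of $W$ into $W'$ whose image has a complement of the right dimension, and that requires $dim \ W \leqslant dim \ W'$. So the small-$W'$ case genuinely needs something else.

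There is a concrete check on that case. With $V$ infinite dimensional, the case $dim \ W' < dim \ W$ and $codim \ W' = dim \ V = codim \ W$ is consistent with the hypotheses. For example, with $W$ of countable dimension and $W' = 0$, the statement would make $0$ open, so $V$ would be discrete. I test this against the trivial topology. There the only open subspace is $V$ itself, and $dim \ V < dim \ V$ is false, so the example does not contradict the lemma. In the discrete case everything is open. So the case $dim \ W' < dim \ W$ may require showing that $V$ must then be discrete.

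My plan for that case is as follows. Use \textbf{Lemma \ref{equaldim}} to make every subspace of dimension $dim \ W$ open. Then intersect two such subspaces $W_{1}, W_{2}$ with $W_{1} \cap W_{2} = W'$. Finite intersections of open sets are open, so $W'$ is open. I expect this to work: split $W$ into parts and choose $W_{1} = W' \oplus A$ and $W_{2} = W' \oplus B$, where $A$, $B$, $W'$ are independent and $dim \ A = dim \ B = dim \ W$. This is possible because the complement of $W'$ has dimension $dim \ V > dim \ W$. Each $W_{i}$ has dimension $dim \ W$, hence is open, and $W_{1} \cap W_{2} = W'$. This intersection trick in fact covers both cases, and it is the key step.
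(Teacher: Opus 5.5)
Your final argument is right in substance, but one dimension count fails as written. In the intersection step you take $dim \ A = dim \ B = dim \ W$, so $dim \ W_{i} = dim \ W' + dim \ W$. This equals $dim \ W$ only when $dim \ W$ is infinite or $W' = 0$. If $dim \ V \geqslant \aleph_{0}$ and $0 < dim \ W' < dim \ W < \aleph_{0}$, your $W_{i}$ are too large, and \textbf{Lemma \ref{equaldim}} does not apply to them.

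The fix is to take $dim \ A = dim \ B = \mu$ with $dim \ W' + \mu = dim \ W$. So $\mu = dim \ W - dim \ W'$ if $dim \ W$ is finite, and $\mu = dim \ W$ otherwise. Then $A$ and $B$ still fit independently in a complement of $W'$, which has dimension $dim \ V > \mu + \mu$.

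Also, the trick does not ``cover both cases''. If $dim \ W' > dim \ W$, no $W' \oplus A$ has dimension $dim \ W$, so you still need your case 1 argument. Remove the abandoned attempts in the middle of the proposal from any write-up.

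Your route differs from the paper's. The paper enlarges rather than shrinks. It extends a basis of $W$ to a basis $B$ of $V$ and takes $W'' = \langle B' \rangle \supseteq W$ with $codim \ W'' = codim \ W'$. Then $W''$ is open because it contains $W$, and \textbf{Lemma \ref{equalcodim}} transfers openness to $W'$.

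That lemma needs $codim \ W'' < dim \ V$. Under the stated hypothesis $dim \ W < dim \ V$ with $V$ infinite dimensional, $codim \ W = dim \ V$, so this fails whenever $codim \ W' = dim \ V$. If also $dim \ W' < dim \ W$, no enlargement of $W$ matches $W'$ in dimension, so \textbf{Lemma \ref{equaldimandcodim}} cannot rescue it either. Your intersection of two $dim \ W$-dimensional subspaces handles exactly this case; taking $W' = 0$, it shows $V$ is then discrete.

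Conversely, the paper's argument is really a proof of the variant with hypothesis $codim \ W < dim \ V$. That is the form actually invoked in \textbf{Propositions \ref{supisnotmax} and \ref{supismax}}, where $codim \ W < dim \ V$ and hence $dim \ W = dim \ V$. Your method rests on \textbf{Lemma \ref{equaldim}}, so it does not reach that variant.
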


\begin{proof}Let $B$ be a basis of $V$ such that $B \cap W$ is a basis of $W$. Let $B' \subseteq B$ such that $B \cap W \subseteq B'$ and $\# B\setminus B' = codim \ W'$ (this is possible since $codim \ W' \leqslant codim \ W$). Take $W'' = \langle B' \rangle$. Since  $W < W''$ and $W$ is open in $V$, then $W''$ has non-empty interior, which implies, by Corollary 2.3 of \cite{Wa}, that $W''$ is open in $V$. We have that $codim \ W'' = codim \ W'$, which implies, by \textbf{Lemma \ref{equalcodim}}, that $W'$ is open in $V$.
\end{proof}

\begin{defi}Let $V$ be a topological vector space. We define $\zeta(V) = \sup\{\kappa: \exists W < V, W \text{ is open in $V$, } codim \ W = \kappa \}$.
\end{defi}

\begin{obs} We are considering just vector spaces with linear topology. So, the neighborhood filter in $V$ of the point $0$, $\mathcal{N}(0)$, is generated by the set $\{W < V: W \text{ is open in $V$ and } codim \ W \leqslant \zeta(V) \}$.
\end{obs}

The next propositions show what happens with the space $V$, for different values of $\zeta(V)$.

\begin{prop}\label{zetaisnotfinite} $\zeta(V) = 0$ or $\zeta(V) \geqslant \aleph_{0}$.
\end{prop}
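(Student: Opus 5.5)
Assume $\zeta(V) \neq 0$; we show $\zeta(V) \geqslant \aleph_{0}$. Then there is an open subspace $W < V$ with $codim \ W = n \geqslant 1$. If $n$ is infinite we are done, so assume $n$ is finite. The plan is to use the automorphism hypothesis, through \textbf{Lemma \ref{equalcodim}}, to produce open subspaces of arbitrarily large finite codimension. Their codimensions are then unbounded among the natural numbers, so the supremum defining $\zeta(V)$ is at least $\aleph_{0}$.

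The argument needs $dim \ V \geqslant \aleph_{0}$. If $V$ is finite dimensional, the argument instead gives, via \textbf{Lemma \ref{equaldim}} and intersections, that $V$ is discrete with $\zeta(V) = dim \ V$ finite. So this case has to be excluded by the standing conventions or handled separately, and I would check how the paper intends it.

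In the infinite-dimensional case, $n < \aleph_{0} \leqslant dim \ V$, so \textbf{Lemma \ref{equalcodim}} applies to $W$: every subspace of $V$ of codimension $n$ is open. Choose a basis $B$ of $V$ such that $B \cap W$ is a basis of $W$; then $\# (B \setminus W) = n$. Since $W$ has codimension $n$ finite and $V$ is infinite dimensional, $B \cap W$ is infinite. Pick $n$ elements $b_{1},\dots,b_{n} \in B \cap W$ and put $W_{1} = \langle B \setminus \{b_{1},\dots,b_{n}\} \rangle$. Then $codim \ W_{1} = n$, so $W_{1}$ is open, and hence $W \cap W_{1}$ is open.

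Moreover, $W \cap W_{1} = \langle (B \cap W) \setminus \{b_{1},\dots,b_{n}\} \rangle$, which has codimension $2n$. Repeating this construction with $k$ pairwise disjoint $n$-element subsets of $B \cap W$, possible since $B \cap W$ is infinite, gives open subspaces of codimension $kn$ for every $k \in \N$. Therefore $\zeta(V) \geqslant \sup_{k} kn = \aleph_{0}$.

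The main obstacle is the bookkeeping that the intersection has exactly the claimed codimension. Computing everything with respect to the single basis $B$ makes this immediate, since each subspace involved is spanned by a subset of $B$. The only other delicate point is the finite-dimensional case noted above.
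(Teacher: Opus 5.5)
Your proof is correct and is essentially the paper's argument. The paper assumes $\zeta(V)=n$ is finite and nonzero, swaps one basis vector $b'\in B\cap W$ with some $b\in B\setminus W$ to get $W'$ of the same dimension and codimension (open by \textbf{Lemma \ref{equaldimandcodim}}), and contradicts maximality since $codim \ W\cap W' = n+1$; you argue directly through \textbf{Lemma \ref{equalcodim}} with disjoint $n$-element blocks to get open subspaces of every codimension $kn$.

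Your finite-dimensional caveat is justified, and the paper does not exclude that case: for a nonzero finite-dimensional discrete $V$ (e.g.\ $V=k$, where $\zeta(V)=1$) the statement is false, and the paper's proof tacitly needs $W\neq 0$ in order to choose $b'$. This does not affect \textbf{Theorem \ref{theoremone}}, since the discrete topology on a finite-dimensional space is its $\aleph_{0}$-codimension topology.
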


\begin{proof}Suppose that $\zeta(V) = n \in \N$, with $n > 0$. Then there exists an open subspace $W$ such that $codim \ W = n$. Let $B$ be a basis of $V$ such that the set $B' = B \cap W$ is a basis of $W$. Note that $\# B \setminus B' = n$ since $codim \ W = n$. Take $b \in B \setminus B'$ and $b' \in B'$. Then we define $W' = \langle (B' \setminus \{b'\}) \cup \{b\} \rangle$. We have that $dim \ W' = dim \ W$ and $codim \ W' = codim \ W$, which implies, by \textbf{Lemma \ref{equaldimandcodim}}, that $W'$ is open in $V$. Then $W \cap W'$ is an open subspace of $V$ such that $codim \ W \cap W' > n$, since $W \cap W'$ is properly contained in $W$. This contradicts the fact that  $\zeta(V) = n$.

Thus $\zeta(V) = 0$ or $\zeta(V) \geqslant \aleph_{0}$.
\end{proof}

\begin{prop}\label{codimensionone}If $\zeta(V) = 0$, then $V$ has the trivial topology.  
\end{prop}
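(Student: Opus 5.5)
The plan is to read off the claim directly from the definition of $\zeta(V)$ and from the standing assumption that $V$ has a linear topology. Recall that $\zeta(V)$ is the supremum of the codimensions of open subspaces of $V$. The whole space $V$ is always an open subspace, and $codim \ V = 0$, so the supremum is taken over a nonempty set. If $\zeta(V) = 0$, then every open subspace $W < V$ satisfies $codim \ W \leqslant 0$, that is, $V/W = 0$, so $W = V$. Hence the only open subspace of $V$ is $V$ itself.

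Next we use that the topology is linear. The neighborhood filter $\mathcal{N}(0)$ is generated by open subspaces, so it is generated by $\{V\}$ alone. Therefore every neighborhood of $0$ contains $V$, and the only neighborhood of $0$ is $V$.

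Since $V$ is a topological vector space, translations are homeomorphisms. So the neighborhood filter of any point $x$ is $x + \mathcal{N}(0) = \{V\}$, and the only nonempty open set is $V$. This is exactly the trivial topology, which agrees with the $1$-codimension topology noted earlier.

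I expect no real obstacle here. The one point to state explicitly is that $\zeta(V)=0$ forces every open subspace to equal $V$, since codimension $0$ means the subspace is the whole space. The hypothesis $Aut(V)=CAut(V)$ is not even needed for this step.
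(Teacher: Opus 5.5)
Your proof is correct and follows essentially the same route as the paper: $\zeta(V)=0$ forces every open subspace to equal $V$, so by linearity of the topology $\mathcal{N}(0)=\{V\}$, which is the trivial topology. Your explicit translation step and your remark that the hypothesis $Aut(V)=CAut(V)$ is not used are both accurate small additions to the paper's argument.
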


\begin{proof} Let $U \in \mathcal{N}(0)$. Then there exists an open subspace $W$ such that $W \subseteq U$. Since $\zeta(V) = 0$, $codim \ W = 0$, which implies that $W = U = V$. So $\mathcal{N}(0) = \{V\}$, which implies that $V$ has the trivial topology.
\end{proof}

\begin{prop}\label{supisnotmax}If there is no open subspace  of $V$ with codimension $\zeta(V)$, then $V$ has the $\zeta(V)$-codimension topology.
\end{prop}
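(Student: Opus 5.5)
We must show that the topology of $V$ coincides with $\tau_{V,\zeta(V)}$, where $\kappa = \zeta(V)$. Since $\kappa$ is a supremum that is not attained, it is not $0$. By \textbf{Proposition \ref{zetaisnotfinite}} it is therefore infinite, so $\kappa \geqslant \aleph_{0}$ and the $\kappa$-codimension topology makes sense. The argument has two inclusions, both comparing neighborhood bases of $0$.

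First, every open subspace $W$ of $V$ has $codim \ W \leqslant \kappa$ by definition of the supremum. Since $\kappa$ is not attained, $codim \ W < \kappa$. Hence every basic neighborhood of $0$ in the given topology is a subspace of codimension less than $\kappa$, and so it is open in $\tau_{V,\kappa}$. This shows the given topology is coarser than $\tau_{V,\kappa}$.

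Conversely, let $W' < V$ with $codim \ W' = \lambda < \kappa$; we must show $W'$ is open. Because $\kappa$ is a non-attained supremum, there is an open subspace $W$ with $\lambda \leqslant codim \ W < \kappa$. If $codim \ W < dim \ V$, then the \textbf{Lemma \ref{smallercodim}} construction applies. Enlarge $W$ to $W'' = \langle B' \rangle \supseteq W$ with $codim \ W'' = \lambda$, exactly as in the proof of that lemma. Then $W''$ is open, since it contains an open subspace (Corollary 2.3 of \cite{Wa}). Now $codim \ W'' = \lambda \leqslant codim \ W < dim \ V$, so \textbf{Lemma \ref{equalcodim}} gives that $W'$ is open.

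The condition $codim \ W < dim \ V$ does hold. Codimensions of subspaces are at most $dim \ V$, so $\kappa \leqslant dim \ V$. Since $codim \ W < \kappa$, we get $codim \ W < dim \ V$. Note that the enlargement step uses only the inclusion $W < W''$ and a choice of basis adapted to $W$. It does not need the hypothesis $dim \ W < dim \ V$ of \textbf{Lemma \ref{smallercodim}}; that hypothesis was only used there to reach \textbf{Lemma \ref{equalcodim}}, and here we reach it through the codimension condition instead.

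The main obstacle is this bookkeeping: applying \textbf{Lemma \ref{equalcodim}} with the correct hypothesis ($codim \ W'' < dim \ V$, not a dimension condition), and confirming that $\kappa \geqslant \aleph_{0}$ so that the topology $\tau_{V,\kappa}$ is defined. Once both inclusions hold, the two neighborhood filters of $0$ coincide, so $V$ has the $\zeta(V)$-codimension topology.
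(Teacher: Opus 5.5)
Your proof is correct and follows the paper's argument. Both show that every open subspace has codimension strictly below the non-attained supremum $\zeta(V)$, and that every subspace of codimension $<\zeta(V)$ is open via the enlargement construction of Lemma \ref{smallercodim} followed by Lemma \ref{equalcodim}. Your extra bookkeeping actually repairs the paper's proof: the paper cites Lemma \ref{smallercodim} directly, but its stated hypothesis $dim \ W < dim \ V$ fails here. An open $W$ with $codim \ W < \zeta(V) \leqslant dim \ V$ has $dim \ W = dim \ V$ once $V$ is infinite dimensional. The condition $codim \ W < dim \ V$ that you verify is exactly what Lemma \ref{equalcodim} needs.
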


\begin{proof}Let $\kappa < \zeta(V)$. Then there exists a cardinal $\kappa'$ such that $\kappa \leqslant \kappa' < \zeta(V)$ and there exists an open subspace $W < V$ with $codim \ W = \kappa'$. By \textbf{Lemma \ref{smallercodim}}, for every $W' < V$ with $codim \ W' = \kappa$, $W'$ is open in $V$. So we have that 
${\{W \! < \! V: W \text{ is open in $V$, } codim \ W \leqslant \zeta(V) \} \! = \! \{W \!< \!V: codim \ W < \zeta(V) \}}$. This implies that $V$ has the $\zeta(V)$-codimension topology.
\end{proof}

\begin{prop}\label{supismax}If there exists an open subspace of $V$ with codimension $\zeta(V)$, then $V$ has the $\zeta(V)^{+}$-codimension topology.
\end{prop}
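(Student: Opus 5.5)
Write $\zeta = \zeta(V)$, and fix an open subspace $W_0 < V$ with $codim \ W_0 = \zeta$. The plan is to show that the family of open subspaces of $V$ coincides with the family $\{W < V : codim \ W < \zeta^{+}\} = \{W < V : codim \ W \leqslant \zeta\}$. One inclusion is immediate. Any open subspace has codimension at most $\zeta$ by the definition of $\zeta(V)$. Since the topology is linear, the neighborhood filter of $0$ is then contained in the filter generated by subspaces of codimension $<\zeta^{+}$. By \textbf{Proposition \ref{zetaisnotfinite}}, either $\zeta = 0$, in which case $\zeta^{+}=1$ and \textbf{Proposition \ref{codimensionone}} finishes, or $\zeta \geqslant \aleph_0$, so $\zeta^{+}$ is an admissible cardinal. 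It remains to show that every $W' < V$ with $codim \ W' \leqslant \zeta$ is open.

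I split according to whether $\zeta < dim \ V$ or $\zeta = dim \ V$ (note that $\zeta \leqslant dim \ V$ always).

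\emph{Case $\zeta < dim \ V$.} Then $codim \ W_0 < dim \ V$. Since $dim \ V$ is infinite and equals $\max\{dim \ W_0, codim \ W_0\}$, we get $dim \ W_0 = dim \ V$. Given $W'$ with $codim \ W' = \zeta$, \textbf{Lemma \ref{equalcodim}} shows directly that $W'$ is open. For $W'$ with $codim \ W' = \lambda < \zeta$, I would copy the proof of \textbf{Lemma \ref{smallercodim}}. Take a basis $B$ of $V$ extending a basis of $W_0$, and enlarge $B\cap W_0$ to $B'$ with $\#(B\setminus B') = \lambda$. Then $W'' = \langle B'\rangle$ contains $W_0$, so it is open by Corollary 2.3 of \cite{Wa}. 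Moreover $codim \ W'' = \lambda < dim \ V$, so \textbf{Lemma \ref{equalcodim}} applied to $W''$ shows that $W'$ is open. (Lemma \ref{smallercodim} itself assumed $dim \ W < dim \ V$, which is why I redo the argument here with Lemma \ref{equalcodim}.)

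\emph{Case $\zeta = dim \ V$.} Here the goal is that $V$ is discrete, i.e.\ that $\{0\}$ is open; every subspace then contains $\{0\}$ and so is open. This is the main obstacle, since $W_0$ could have any dimension. If $dim \ W_0 < dim \ V$, \textbf{Lemma \ref{equaldim}} applied with $W' = 0$ fails unless $dim \ W_0 = 0$, so I would instead argue as follows. Choose a basis $B = B_1 \sqcup B_2$ of $V$ with $B_1$ a basis of $W_0$, where $\#B_2 = dim \ V$. Split $B_2 = C \sqcup D$ with $\#C = \#D = dim \ V$. The subspace $W_1 = \langle B_1 \cup C\rangle$ has the same dimension and codimension as $W_0$, so it is open by \textbf{Lemma \ref{equaldimandcodim}}.

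Next, define an automorphism permuting the basis by swapping $B_1\cup C$ with a set disjoint from it. Since $\#(B_1\cup C) = dim \ V = \#D$, we can choose a bijection of $B$ that sends $B_1\cup C$ into $D$ and sends $D$ onto a set of the same cardinality that contains $B_1\cup C$. This realizes an automorphism $h$ with $h(W_1) \cap W_1 = 0$. Since $h$ is continuous, $h^{-1}$ is also an automorphism and hence continuous as well, so $h$ is a homeomorphism. Therefore $h(W_1)$ is open, and so $\{0\} = W_1 \cap h(W_1)$ is open. The cardinal bookkeeping that makes such a bijection exist (using $\kappa + \kappa = \kappa$ for infinite $\kappa$) is the step I would check most carefully.

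In either case, the open subspaces are exactly those of codimension $\leqslant \zeta$, i.e.\ $< \zeta^{+}$. Hence $V$ has the $\zeta(V)^{+}$-codimension topology.
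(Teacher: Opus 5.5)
Your overall argument follows the paper's proof. It uses the same split into $\zeta<\dim V$ and $\zeta=\dim V$ and the same lemmas. In the second case it uses the same idea of producing an open subspace that meets another open one only in $0$. In the first case you are right that \textbf{Lemma \ref{smallercodim}} as stated (hypothesis $\dim W<\dim V$) does not literally apply. The open subspace there has $\operatorname{codim} W<\dim V$ and full dimension. The hypothesis that lemma's proof actually uses is $\operatorname{codim} W<\dim V$, so your re-derivation through \textbf{Lemma \ref{equalcodim}} is the correct version.

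There is one false claim in the case $\zeta=\dim V$. You assert that $W_1=\langle B_1\cup C\rangle$ has the same dimension and codimension as $W_0$. But $\dim W_1=\#B_1+\#C=\dim V$, while $\dim W_0=\#B_1$. These differ exactly when $\dim W_0<\dim V$, which is the situation your construction was meant to handle. So \textbf{Lemma \ref{equaldimandcodim}} does not give openness of $W_1$.

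The conclusion is still true for a simpler reason: $W_1\supseteq W_0$, and a subspace containing an open subspace is open. This is the same fact you used for $W''$ in the first case. With that repair the rest is fine. A bijection of $B$ exchanging $B_1\cup C$ and $D$ exists because both have cardinality $\dim V$. Then $h(W_1)=\langle D\rangle$ meets $W_1$ in $0$.

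For comparison, the paper needs neither $W_1$ nor $h$. It picks $B''\subseteq B_2$ with $\#B''=\#B_1$. Since $\#B_2=\dim V$ is infinite, $\langle B''\rangle$ has the same dimension and codimension as $W_0$. Hence it is open by \textbf{Lemma \ref{equaldimandcodim}}, and $W_0\cap\langle B''\rangle=0$. In your version, likewise, $\langle D\rangle$ already has the same dimension and codimension as $W_1$. So the appeal to continuity of $h^{-1}$ can be replaced by a direct application of \textbf{Lemma \ref{equaldimandcodim}}.
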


\begin{proof}Suppose that $\zeta(V) < dim \ V$. Let $\kappa \leqslant \zeta(V)$. Then there exists a cardinal $\kappa'$ such that $\kappa \leqslant \kappa' \leqslant \zeta(V) < dim \ V$ and there exists an open subspace $W < V$ with $codim \ W = \kappa'$. By \textbf{Lemma \ref{smallercodim}}, for every $W' < V$ with $codim \ W' = \kappa$, $W'$ is open in $V$. Then we have that $\{W < V: W \text{ is open in $V$, } codim \ W \leqslant \zeta(V) \} = \{W < V: codim \ W < \zeta(V)^{+} \}$, which implies that $V$ has the $\zeta(V)^{+}$-codimension topology.

Suppose that $\zeta(V) = dim \ V$. Then there exists  $W < V$, open in $V$, such that $codim \ W = \zeta(V) = dim \ V$. Let $B$ be a basis of $W$ and $B'$ a set that is disjoint to $B$ and such that $B \cup B'$ is a basis of $V$. We have that $\# B' = codim \ W = dim \ V$ and $\# B \leqslant \# B'$. Take $B'' \subseteq B'$ such that $\# B'' = \# B$ and take $W'' = \langle  B'' \rangle$. We have that $dim \ W'' = dim \ W$ and $codim \ W'' = codim \ W$. So, by \textbf{Lemma \ref{equaldimandcodim}}, it follows that $W''$ is open in $V$, which implies that $0 = W \cap W''$ is also open in $V$. Then $V$ is discrete, which means that $V$ has the $\zeta(V)^{+}$-codimension topology.   
\end{proof}

Summarizing the above propositions, we get the following theorem:

\begin{teo}\label{theoremone}Let $V$ be a topological vector space with linear topology. Then $V$ has a codimension topology if and only if $Aut(V) = CAut(V)$.   
\end{teo}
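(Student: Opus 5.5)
The proof splits into the two implications. The substantive direction, that $Aut(V) = CAut(V)$ forces a codimension topology, has already been carried out in the propositions above, so it only needs to be assembled. The converse is a direct check that automorphisms preserve codimension.

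\emph{Codimension topology implies $Aut(V) = CAut(V)$.} Suppose $V$ has the $\kappa$-codimension topology and let $f \in Aut(V)$. Since $V$ is a topological vector space and $f$ is linear, it suffices to check continuity at $0$. Because the topology is linear, it is enough to show that $f^{-1}(W)$ is open for every basic open subspace $W$, that is, for every $W < V$ with $codim \ W < \kappa$. By Proposition \ref{preimagesandcodimension}, $codim \ f^{-1}(W) \leqslant codim \ W < \kappa$. Hence $f^{-1}(W)$ is itself a basic open subspace, so $f$ is continuous. For $\kappa = 1$ the topology is trivial and there is nothing to prove. Thus $Aut(V) \subseteq CAut(V)$, and the reverse inclusion holds by definition.

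\emph{$Aut(V) = CAut(V)$ implies a codimension topology.} Assume $V$ has linear topology and every algebraic automorphism is continuous. This is exactly the standing hypothesis under which Lemmas \ref{equaldimandcodim}--\ref{smallercodim} and Propositions \ref{zetaisnotfinite}--\ref{supismax} were proved. By Proposition \ref{zetaisnotfinite}, either $\zeta(V) = 0$ or $\zeta(V) \geqslant \aleph_0$.
\begin{enumerate}
\item If $\zeta(V) = 0$, Proposition \ref{codimensionone} shows the topology is trivial, which is the $1$-codimension topology.
\item If $\zeta(V) \geqslant \aleph_0$, split on whether the supremum defining $\zeta(V)$ is attained by an open subspace.
\begin{enumerate}
\item If it is not attained, Proposition \ref{supisnotmax} gives the $\zeta(V)$-codimension topology. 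This is a legitimate codimension topology, since $\zeta(V) \geqslant \aleph_0$.
\item If it is attained, Proposition \ref{supismax} gives the $\zeta(V)^{+}$-codimension topology. This is again admissible, as $\zeta(V)^{+} \geqslant \aleph_0$.
\end{enumerate}
\end{enumerate}
These cases are exhaustive, so $V$ has a codimension topology.

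The main point to check is the bookkeeping in the second direction. The cardinals produced must satisfy the admissibility condition ($\kappa = 1$ or $\kappa \geqslant \aleph_0$), and Proposition \ref{zetaisnotfinite} is exactly what guarantees this. One should also confirm that the case $\zeta(V) = dim \ V$ within Proposition \ref{supismax} yields the discrete topology. It does, because the discrete topology coincides with the $\zeta(V)^{+}$-codimension topology whenever $\zeta(V)^{+} > dim \ V$.
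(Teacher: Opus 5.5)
Your proof is correct and follows the paper's argument: the forward direction uses the fact that preimages under a linear map do not increase codimension (you make the paper's one-line remark explicit via Proposition \ref{preimagesandcodimension}), and the converse is the same case split on $\zeta(V)$ using Propositions \ref{zetaisnotfinite}, \ref{codimensionone}, \ref{supisnotmax} and \ref{supismax}. One caveat, which you share with the paper: Proposition \ref{zetaisnotfinite}, on which your admissibility bookkeeping rests, is false for a nonzero finite-dimensional discrete $V$ (there $\zeta(V) = dim \ V$ is finite, and the only open subspace of that codimension is $0$, so the basis-swapping argument has no $b'$ to swap), but such a $V$ obviously carries the $\aleph_{0}$-codimension topology, so the theorem is unaffected once this case is handled separately.
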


\begin{proof}$(\Rightarrow)$ It is immediate from the fact that isomorphisms preserve the codimension of its subspaces.

$(\Leftarrow)$ We know, by \textbf{Proposition \ref{zetaisnotfinite}}, that $\zeta(V) = 0$ or $\zeta(V) \geqslant \aleph_{0}$. If $\zeta(V) = 0$, then $V$ has $1$-codimension topology, by \textbf{Proposition \ref{codimensionone}}.  So we can suppose that $\zeta(V) \geqslant \aleph_{0}$. If there is no open subspace of $V$ with codimension $\zeta(V)$, then, by \textbf{Proposition \ref{supisnotmax}}, $V$ has the $\zeta(V)$-codimension topology. If there exists an open subspace of $V$ with codimension $\zeta(V)$, then, by \textbf{Proposition \ref{supismax}}, $V$ has the $\zeta(V)^{+}$-codimension topology.
\end{proof}

There are some cases where every linear map is continuous, as the following propositions:

\begin{prop}\label{continuitytosmallspaces}Let $V$ and $V'$ be topological vector spaces with linear topology such that $V$ has the $\kappa$-codimension topology. If $dim \ V' < \kappa$, then
$CHom(V,V') = Hom(V,V')$.    
\end{prop}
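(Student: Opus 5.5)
Let $f: V \rightarrow V'$ be a linear map. Since both spaces carry linear topologies, the plan is to check continuity only at $0$ and only against open subspaces: it suffices to show that for every open subspace $U' < V'$, the preimage $f^{-1}(U')$ is open in $V$. Continuity at $0$ of a linear map between topological vector spaces then gives continuity everywhere, by translation invariance. The neighborhood filter of $0$ in $V'$ is generated by open subspaces, so every neighborhood of $0$ contains such a $U'$, and its preimage is contained in the preimage of the neighborhood.

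The key step is a codimension estimate. Take an open subspace $U' < V'$. Then $codim_{V'} U' = dim \ V'/U' \leqslant dim \ V' < \kappa$. By \textbf{Proposition \ref{preimagesandcodimension}}, applied to $f$ and $U'$, we get $codim_{V} f^{-1}(U') < \kappa$. Explicitly, $f$ induces an injection $V/f^{-1}(U') \rightarrow V'/U'$, and that injection is what the proposition encodes.

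Since $V$ has the $\kappa$-codimension topology, every subspace of $V$ of codimension less than $\kappa$ is a member of the generating family of the neighborhood filter of $0$. Hence $f^{-1}(U')$ is a neighborhood of $0$. Being a subspace with nonempty interior, it is open by Corollary 2.3 of \cite{Wa}, exactly as that corollary was used in the proof of \textbf{Lemma \ref{smallercodim}}. This gives $f \in CHom(V,V')$, and since $f$ was an arbitrary linear map, $CHom(V,V') = Hom(V,V')$.

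I expect no real obstacle. The only points needing care are the edge case $\kappa = 1$, which is vacuous because $dim \ V' < 1$ forces $V' = 0$, and the reduction from general neighborhoods to open subspaces, which is where the hypothesis that $V'$ has a linear topology is used.
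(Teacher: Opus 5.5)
Your proof is correct and is essentially the paper's argument: both reduce continuity to showing that the preimage of every open subspace of $V'$ has codimension less than $\kappa$ in $V$. The only cosmetic difference is how the bound is obtained. The paper goes through the kernel, $codim \ f^{-1}(W') \leqslant codim \ f^{-1}(0) \leqslant dim \ V' < \kappa$, whereas you use $codim \ U' \leqslant dim \ V'$ together with \textbf{Proposition \ref{preimagesandcodimension}}.
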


\begin{proof}Let $f \in Hom(V,V')$. First we have that $V/f^{-1}(0)$ is isomorphic (as vector spaces) to a subspace of $V'$, which implies that $codim \ f^{-1}(0) = dim \ V/f^{-1}(0) \leqslant dim \ V' < \kappa$.

Let $W' < V'$. Then $f^{-1}(W) > f^{-1}(0)$, which implies that $codim \ f^{-1}(W) \leqslant codim \ f^{-1}(0) < \kappa$ and then $f^{-1}(W)$ is open in $V$. Since $V'$ has linear topology, the set of open subspaces generates the neighborhood filter of $0$ in $V'$, which implies that $f$ is a continuous map (since any preimage of a subspace is open).
\end{proof}

\begin{prop}\label{continuitybetweenboundedspaces}Let $V$ and $V'$ be topological vector spaces such that $V$ and $V'$ have the $\kappa$ and $\kappa'$-codimension topologies, respectively. If $\kappa' \leqslant \kappa$, then
$CHom(V,V') = Hom(V,V')$.    
\end{prop}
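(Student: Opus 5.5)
Since both topologies are linear, continuity of a linear map $f \colon V \rightarrow V'$ reduces to checking preimages of a base of the neighborhood filter of $0$ in $V'$. By definition of the $\kappa'$-codimension topology, this filter is generated by the subspaces $W' < V'$ with $codim \ W' < \kappa'$. So the plan is to fix an arbitrary $f \in Hom(V,V')$ and such a $W'$, and to show that $f^{-1}(W')$ is open in $V$. Since $V$ carries the $\kappa$-codimension topology, it suffices to show $codim_{V} f^{-1}(W') < \kappa$: then $f^{-1}(W')$ is itself one of the generating open subspaces of $V$.

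The codimension bound comes from \textbf{Proposition \ref{preimagesandcodimension}}, or directly from its proof. The composite $V \rightarrow V' \rightarrow V'/W'$ has kernel $f^{-1}(W')$, so it induces an injective linear map $V/f^{-1}(W') \rightarrow V'/W'$. Hence
$$codim \ f^{-1}(W') \leqslant codim \ W' < \kappa' \leqslant \kappa.$$
Thus $f^{-1}(W')$ is an open subspace of $V$.

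Finally, I would conclude continuity at $0$, and hence everywhere by translation invariance, since $f$ is linear and both spaces are topological vector spaces. Given any neighborhood $U$ of $0$ in $V'$, there is an open subspace $W' \subseteq U$ with $codim \ W' < \kappa'$, so $f^{-1}(U) \supseteq f^{-1}(W')$ is a neighborhood of $0$. Therefore $f \in CHom(V,V')$, and the reverse inclusion $CHom(V,V') \subseteq Hom(V,V')$ is trivial.

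There is no real obstacle here. The one point to be careful about is the degenerate case $\kappa' = 1$. Then the only generating subspace is $W' = V'$, whose preimage is $V$, which is open. The argument above covers this case without change, since $codim \ V = 0 < \kappa$. No restriction on $\kappa$ being infinite is needed beyond what the definition of codimension topology already imposes.
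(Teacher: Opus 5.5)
Your proof is correct and is essentially the paper's argument: pull back a subspace $W' < V'$ with $codim \ W' < \kappa'$, apply Proposition \ref{preimagesandcodimension} to get $codim \ f^{-1}(W') < \kappa' \leqslant \kappa$, and conclude that $f^{-1}(W')$ is open in $V$, so $f$ is continuous. Your added remarks on reducing to continuity at $0$ and on the case $\kappa' = 1$ are accurate; they just make explicit what the paper's short proof leaves implicit.
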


\begin{proof}Let $f \in Hom(V,V')$. Let $W'$ be an open subspace of $V'$. Then $codim \ W' < \kappa'$. By \textbf{Proposition \ref{preimagesandcodimension}}, $codim \ f^{-1}(W')  < \kappa' \leqslant \kappa$. So $f^{-1}(W')$ is open in $V$. Thus $f$ is continuous.
\end{proof}

\begin{obs}Note that \textbf{Theorem \ref{theoremone}} is an analogous version of Theorem 1 of \cite{So9} for topological vector spaces with linear topology. However, when we consider the whole set of homomorphisms instead of isomorphisms, the same analogy does not hold. For topological spaces we get that the trivial and the discrete spaces are the only ones where all maps to itself are continuous, Corollary 1 of \cite{So9}, but, as a special case of \textbf{Proposition \ref{continuitybetweenboundedspaces}}, we get that all vector spaces with a codimension topology have the property that all linear maps to itself are continuous.
\end{obs}

\section{Some properties of vector spaces with \\ codimension topology}

Here we present some properties of vector spaces with codimension topology.

\begin{prop}Let $V$ be a vector space with $\kappa$-codimension topology, with $\kappa > 1$, and $B$ a basis of $V$. Then $B$, with its subspace topology, is discrete.
\end{prop}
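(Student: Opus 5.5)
To show that $B$ is discrete in the subspace topology, I will exhibit, for each $b \in B$, an open subspace $W_b$ of $V$ with $W_b \cap B = \{b\}$. Since open subspaces are open sets, this makes every singleton $\{b\}$ open in $B$.

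The natural choice is a hyperplane avoiding $B \setminus \{b\}$, shifted by $b$. Let $H_b = \langle B \setminus \{b\} \rangle$. Then $codim \ H_b = 1$, because $V = H_b \oplus \langle b \rangle$. Since $\kappa > 1$ and, by the standing convention, $\kappa \geqslant \aleph_{0}$, we have $1 < \kappa$. So $H_b$ belongs to the generating family $\{W < V: codim \ W < \kappa\}$ and is an open subspace of $V$. Consequently the translate $b + H_b$ is open in $V$, since translations are homeomorphisms in a topological vector space.

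The key step is to check that $(b + H_b) \cap B = \{b\}$. Clearly $b \in b + H_b$. Now suppose $b' \in B$ with $b' \neq b$ and $b' \in b + H_b$. Then $b' - b \in H_b$, and since $b' \in H_b$ this gives $b \in H_b$. That contradicts the linear independence of $B$. Hence $\{b\} = (b + H_b) \cap B$ is open in the subspace topology for every $b \in B$, and $B$ is discrete.

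There is no real obstacle; the only points needing care are the two facts behind openness. First, $\kappa > 1$ forces $\kappa \geqslant \aleph_{0} > 1$, so hyperplanes are open. Second, one uses the coset $b + H_b$ rather than $H_b$ itself, since $H_b$ contains all of $B \setminus \{b\}$ and not $b$.
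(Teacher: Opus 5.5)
Your proof is correct and follows the paper's argument: both rest on the hyperplane $\langle B\setminus\{b\}\rangle$ having codimension $1<\kappa$ and therefore being open. The paper then uses the fact that open subspaces are closed to conclude that $B\setminus\{b\}$ is closed in $B$. You instead use the open coset $b+H_b$ to isolate $b$ directly, which is the same fact unpacked.
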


\begin{proof}Let $b \in B$. Take $W = \langle B \setminus \{b\}\rangle$. We have that $codim \ W = 1 < \kappa$, which implies that $W$ is an open subspace of $V$. But open subspaces are also closed (Corollary 2.3 of \cite{Wa}), which implies that $W$ is closed in $V$. Then $B \setminus \{b\} = W \cap B$ is a closed subset of $B$, which implies that $\{b\}$ is an open subset of $B$. Thus $B$ is discrete. 
\end{proof}

\begin{obs}Maybe it is surprising the fact that the subspace topology of $B$ is not the $co-\kappa$ topology, but $B$ has an uniform structure induced by the uniform structure of the topological vector space $V$ and co-cardinal spaces are not uniformizable in general.  In fact, the cofinite topology is $T_{1}$, is not Hausdorff (Counterexample 19 of \cite{SS}) and every $T_{1}$ uniform space is Hausdorff, which implies that a set with the cofinite topology cannot be uniformizable.
\end{obs}

\begin{prop}Let $V$ be a topological space with $\kappa$-codimension topology and $\kappa > 1$. If $W < V$, then $W$ is closed.
\end{prop}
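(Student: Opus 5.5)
We will show that every subspace $W < V$ is an intersection of subspaces of codimension $1$. Since $\kappa > 1$ forces $\kappa \geqslant \aleph_{0}$, each of those subspaces is open in the $\kappa$-codimension topology. By Corollary 2.3 of \cite{Wa}, open subspaces of a topological vector space are also closed. An arbitrary intersection of closed sets is closed, so $W$ will be closed.

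\textbf{Step 1: building the codimension-one subspaces.} Choose a basis $C$ of $W$ and extend it to a basis $C \cup D$ of $V$, with $C \cap D = \emptyset$. For each $d \in D$, let $H_{d} = \langle (C \cup D) \setminus \{d\} \rangle$. Then $codim \ H_{d} = 1 < \kappa$, so $H_{d}$ is open. This is the same argument used in the preceding proposition about bases being discrete.

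\textbf{Step 2: the intersection is exactly $W$.} We claim $\bigcap_{d \in D} H_{d} = W$.
\begin{enumerate}
\item Each $H_{d}$ contains $C$, hence contains $W$.
\item Conversely, let $v \in \bigcap_{d \in D} H_{d}$ and write $v$ as a finite linear combination of elements of $C \cup D$. Fix any $d \in D$. Since $v \in H_{d}$ and expansions in a basis are unique, the coefficient of $d$ in this expansion is $0$. This holds for every $d \in D$, so $v \in \langle C \rangle = W$.
\end{enumerate}
If $D = \emptyset$, then $W = V$, which is trivially closed.

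\textbf{Alternative closing argument.} One can instead avoid the arbitrary intersection. For $v \notin W$, the expansion of $v$ has a nonzero coefficient at some $d \in D$. Then $v + H_{d}$ is an open neighborhood of $v$. It is disjoint from $W$, because every element of $v + H_{d}$ has the same nonzero $d$-coefficient as $v$, while every element of $W$ has $d$-coefficient $0$. Hence $V \setminus W$ is open.

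\textbf{Main obstacle.} There is no real obstacle. The only point needing care is the hypothesis $\kappa > 1$: it is exactly what guarantees that codimension-one subspaces are open. When $\kappa = 1$ the topology is trivial, and the statement fails for proper nonzero subspaces.
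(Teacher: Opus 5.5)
Your proof is correct and follows the paper's own argument: you write $W$ as an intersection of codimension-one subspaces, which are open because $\kappa > 1$ and hence closed. The paper simply asserts the identity $W = \bigcap\{W' : W < W',\ codim\ W' = 1\}$, while you verify it explicitly with a basis extension; your alternative argument via the neighborhood $v + H_{d}$ is also valid.
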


\begin{proof}We have that $W = \bigcap \{W' < V: W < W', codim \ W' = 1\}$. Since $\kappa > 1$, all codimension $1$ subspaces are closed, which implies that $W$ is closed as well.
\end{proof}

As a consequence, we get:

\begin{cor}Let $V$ be a topological space with $\kappa$-codimension topology and $\kappa > 1$. Then $V$ is Hausdorff.
\end{cor}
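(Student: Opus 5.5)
The plan is to derive the Hausdorff property from the preceding proposition: when $\kappa > 1$, every subspace of $V$ is closed. In particular the zero subspace $\{0\} < V$ is closed. In a topological vector space, translations $x \mapsto x + v$ are homeomorphisms, so every singleton $\{v\} = v + \{0\}$ is closed. Hence $V$ is $T_{1}$.

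The key step is the standard fact that a $T_{1}$ topological vector space, and more generally a $T_{1}$ topological group, is Hausdorff. I would not just quote this but verify it directly in the linear-topology setting, where it is especially transparent. Let $x \neq y$ in $V$, and set $v = x - y \neq 0$. Extend $\{v\}$ to a basis $B$ of $V$ and take $W = \langle B \setminus \{v\} \rangle$. Then $codim \ W = 1 < \kappa$, so $W$ is an open subspace, and $v \notin W$. The sets $x + W$ and $y + W$ are open neighbourhoods of $x$ and $y$. They are disjoint: if $x + w = y + w'$ with $w, w' \in W$, then $v = w' - w \in W$, which is a contradiction.

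This direct argument uses only that codimension-one subspaces are open when $\kappa > 1$. It even bypasses the closedness proposition, which I would mention only as motivation. I do not expect any real obstacle. The only point needing care is that $\kappa > 1$ forces $\kappa \geqslant \aleph_{0} > 1$, so subspaces of codimension $1$ are indeed basic open neighbourhoods of $0$. The case $\kappa = 1$ is correctly excluded, since the trivial topology is not Hausdorff when $V \neq 0$.
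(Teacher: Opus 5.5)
Your proposal is correct, and its first paragraph is essentially the paper's proof: $\{0\}$ is closed by the preceding proposition, and a topological vector space in which $\{0\}$ is closed is Hausdorff (the paper cites Theorem 1.7 of Warner for this). Your direct argument, which separates $x$ and $y$ by the cosets $x+W$ and $y+W$ of a codimension-one open subspace $W$ not containing $x-y$, is just that standard fact unpacked for linear topologies; it makes the proof self-contained, relying neither on the closedness proposition nor on the external reference.
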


\begin{proof}By the last proposition, $0$ is closed in $V$, which implies that $V$ is a Hausdorff space (Theorem 1.7 of \cite{Wa}).
\end{proof}

\begin{lema}\label{subspacetopology1}Let $V$ be a topological space with the $\kappa$-codimension topology and $W < V$. Then $W$ has the $\kappa'$-codimension topology, for some $\kappa' \geqslant \kappa$.
\end{lema}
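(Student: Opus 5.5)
The plan is to show that the subspace topology on $W$ is exactly the $\kappa$-codimension topology of $W$, i.e.\ that the statement holds with $\kappa' = \kappa$. (When $\kappa > dim \ W$ this is simply the discrete topology on $W$, which is also the $\kappa'$-codimension topology for every $\kappa' \geqslant dim \ W$; either way $\kappa' = \kappa$ is an admissible choice.) Since $V$ has linear topology with base at $0$ given by $\{U < V: codim_{V} U < \kappa\}$, the subspace topology on $W$ is linear. Its neighborhood filter of $0$ is generated by the sets $W \cap U$ with $codim_{V} U < \kappa$. So it suffices to prove the equality of families
$$\{W \cap U: U < V,\ codim_{V} U < \kappa\} = \{X < W: codim_{W} X < \kappa\}.$$

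For the inclusion $\subseteq$, I will argue as in \textbf{Proposition \ref{preimagesandcodimension}}, applied to the inclusion map $W \rightarrow V$. The composite $W \rightarrow V \rightarrow V/U$ has kernel $W \cap U$. Hence $W/(W\cap U)$ embeds into $V/U$, which gives $codim_{W}(W\cap U) \leqslant codim_{V} U < \kappa$.

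For the inclusion $\supseteq$, let $X < W$ with $codim_{W} X < \kappa$, and choose a complement $C$ of $W$ in $V$, so that $V = W \oplus C$. Put $U = X \oplus C$. Then $U \cap W = X$, because any $x + c \in W$ with $x \in X$ and $c \in C$ forces $c \in W \cap C = 0$. Moreover $V/U \cong W/X$, so $codim_{V} U = codim_{W} X < \kappa$. Thus $X = W \cap U$ with $U$ open in $V$.

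Combining the two inclusions, the open subspaces generating the neighborhood filter of $0$ in $W$ are precisely the subspaces of codimension less than $\kappa$ in $W$. Hence $W$ carries $\tau_{W,\kappa}$. I do not expect a serious obstacle; the only points needing care are the complement construction in the second inclusion and the remark that the degenerate cases $\kappa = 1$ (trivial topology) and $\kappa > dim \ W$ (discrete topology) are covered by the same argument.
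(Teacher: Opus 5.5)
Your proof is correct, and it proves more than the lemma asks: the subspace topology on $W$ is exactly $\tau_{W,\kappa}$, which is the content of Proposition \ref{subspacetopology}.

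The paper argues differently in the first half. It extends an arbitrary automorphism of $W$ to an automorphism of $V$ via a complement and observes that the extension is continuous. Hence every automorphism of $W$ is continuous, and Theorem \ref{theoremone} then gives \emph{some} codimension topology $\tau_{W,\kappa'}$ on $W$. Only after that does the paper use exactly your $\supseteq$ construction ($Z \oplus W'$ with $W'$ a complement of $W$) to show $\kappa \leqslant \kappa'$. You replace the appeal to the classification theorem with the elementary $\subseteq$ inclusion (Proposition \ref{preimagesandcodimension} applied to the inclusion $W \rightarrow V$). This shows directly that every open subspace of $W$ has codimension less than $\kappa$ in $W$.

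The paper's route has the conceptual appeal of deriving the lemma from automatic continuity. However, it depends on the whole chain of lemmas behind Theorem \ref{theoremone} and leaves $\kappa'$ undetermined. The paper therefore needs the later, more involved Proposition \ref{subspacetopology}, which uses coproducts, the decomposition corollary, and a case split on $dim \ W$ and $codim_{V} W$. That case split does not visibly cover, e.g., $dim \ W > \kappa$ with $codim_{V} W \geqslant \kappa$. Your route is self-contained and settles $\kappa' = \kappa$ uniformly for all $W$.

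One small slip in your parenthetical: the discrete topology on $W$ equals $\tau_{W,\kappa'}$ only for $\kappa' > dim \ W$, not for $\kappa' \geqslant dim \ W$. For instance, if $dim \ W = \aleph_{0}$, the $\aleph_{0}$-codimension topology is not discrete, since $\{0\}$ has infinite codimension. This is harmless, because your proof only uses $\kappa' = \kappa$.
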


\begin{proof}Let $f: W \rightarrow W$ be an isomorphism. Then we can extend $f$ to an isomorphism $f': V \rightarrow V$, which is continuous since $V$ has codimension topology. So $f$ is continuous, which implies that $W$ has the $\kappa'$-codimension topology for some cardinal $\kappa'$.

Let $Z < W$ with $codim_{W} Z < \kappa$ and $W' < V$ such that $V = W\oplus W'$ as vector spaces. Then $codim_{V} \ Z\oplus W' = codim_{W} Z < \kappa$, which implies that $Z\oplus W'$ is open in $V$ and then $Z =(Z\oplus W') \cap W$ is open in $W$. So $\kappa \leqslant \kappa'$.
\end{proof}

\begin{prop}Let $V$ and $V'$ be infinite dimensional vector spaces with $\kappa$-codimension topology, where $\kappa > 1$. Then $V \oplus V'$, with the coproduct topology, is isomorphic to $V \oplus V'$ with the $\kappa$-codimension topology, as topological vector spaces.
\end{prop}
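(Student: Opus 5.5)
The plan is to show that the identity map of $V \oplus V'$ is a homeomorphism between the coproduct topology and the $\kappa$-codimension topology. I will do this by comparing bases of neighborhoods of $0$ consisting of subspaces. For vector spaces with linear topology, the coproduct (direct sum) topology on $V \oplus V'$ has as a base of the neighborhood filter of $0$ the subspaces $W \oplus W'$, with $W$ open in $V$ and $W'$ open in $V'$. Since both factors carry the $\kappa$-codimension topology, this means $codim_{V} W < \kappa$ and $codim_{V'} W' < \kappa$. The $\kappa$-codimension topology on $V \oplus V'$ has as base the subspaces $Z < V\oplus V'$ with $codim \ Z < \kappa$. So it suffices to show that each basic set of either family contains a basic set of the other.

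First step: take a basic coproduct neighborhood $W \oplus W'$. We have $(V\oplus V')/(W\oplus W') \cong V/W \oplus V'/W'$, so $codim (W\oplus W') = codim_{V} W + codim_{V'} W'$. Since $\kappa > 1$, the standing assumption on $\kappa$ gives $\kappa \geqslant \aleph_{0}$. A sum of two cardinals each less than the infinite cardinal $\kappa$ is again less than $\kappa$ (the same cardinal arithmetic as in \textbf{Proposition \ref{codimensionofintersection}}). Hence $W \oplus W'$ is itself open in the $\kappa$-codimension topology, and the codimension topology is finer than or equal to the coproduct topology.

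Second step: take $Z < V \oplus V'$ with $codim \ Z < \kappa$. Let $\iota: V \to V\oplus V'$ and $\iota': V' \to V \oplus V'$ be the canonical inclusions. By \textbf{Proposition \ref{preimagesandcodimension}}, $Z \cap V = \iota^{-1}(Z)$ has codimension less than $\kappa$ in $V$, and similarly $Z \cap V' = \iota'^{-1}(Z)$ in $V'$. Thus $(Z\cap V)\oplus(Z \cap V')$ is a basic open subspace of the coproduct topology, and it is contained in $Z$ because $Z$ is a subspace. Hence $Z$ is open in the coproduct topology. The two topologies therefore coincide, and the identity is the required isomorphism of topological vector spaces.

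The only step needing care is the description of the coproduct topology, so I will state it precisely. In the category of topological vector spaces with linear topology, the finest linear topology making both inclusions continuous has base $\{W\oplus W'\}$. A subspace $U$ is open there if and only if $U \cap V$ and $U \cap V'$ are open, and then $U \supseteq (U\cap V)\oplus(U\cap V')$. I do not expect the infinite-dimensionality hypothesis to be needed for this argument. At most it excludes degenerate cases; for instance, when $\kappa > \dim V$ it simply yields the discrete topology on both sides, consistent with the above.
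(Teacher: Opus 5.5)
Your proof is correct, and it takes a genuinely different, more direct route than the paper.

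The paper argues indirectly through its automatic-continuity characterization:
\begin{enumerate}
\item It shows that every linear self-map of $V\oplus V'$ with the coproduct topology is continuous, by splitting the map into components and applying Proposition \ref{continuitybetweenboundedspaces} together with the universal property of the coproduct.
\item It concludes from Theorem \ref{theoremone} that the coproduct topology is the $\kappa'$-codimension topology for some cardinal $\kappa'$.
\item It then pins down $\kappa'=\kappa$. It gets $\kappa'\leqslant\kappa$ from Lemma \ref{subspacetopology1} applied to $V\oplus 0$. It rules out $\kappa'<\kappa$ by exhibiting an open subspace $Z\oplus Z'$ of codimension exactly $\kappa'$; this is where infinite dimensionality is used, to guarantee subspaces of that codimension exist.
\end{enumerate}

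You instead compare neighborhood bases of $0$ directly. You use only three facts: codimension is additive on $W\oplus W'$; $\lambda+\mu<\kappa$ whenever $\lambda,\mu<\kappa$ and $\kappa$ is infinite; and Proposition \ref{preimagesandcodimension} applied to the two canonical inclusions.

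Your argument is self-contained and does not need Theorem \ref{theoremone} at all. It shows the infinite-dimensionality hypothesis is superfluous. It also sidesteps the cardinal bookkeeping in identifying $\kappa'$, which is somewhat delicate when $\kappa$ exceeds $\dim V$, since the cardinal is then not determined by the topology. The paper's route buys mainly an illustration of Theorem \ref{theoremone} as a tool for recognizing codimension topologies, at the cost of length and those extra cardinal considerations.
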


\begin{obs}The coproduct $V\oplus V'$ and the product $V\times V'$ are isomorphic
as vector spaces and also they have the same topology (page 105 of \cite{BeH}).    
\end{obs}

\begin{proof}Consider $V \oplus V'$ with the coproduct topology. Let $f: V \oplus V' \rightarrow V \oplus V'$ be a linear map. Then there exists linear maps $f_{V}: V \rightarrow V\oplus V'$ and ${f_{V'}: V' \rightarrow V \oplus V'}$ that induce $f$. We can decompose $f_{V'} = f_{V',V}+f_{V',V'}$, where $f_{V',V}: V' \rightarrow V \oplus V'$ has its image contained in $V \oplus 0$ and the map $f_{V',V'}: V' \rightarrow V \oplus V'$ has its image contained in $0 \oplus V'$. By \textbf{Proposition \ref{continuitybetweenboundedspaces}} and the fact that $V\oplus 0$ is isomorphic to $V$ and $0 \oplus V'$ is isomorphic to $V'$ as topological vector spaces, then the maps $f_{V',V}$ and $f_{V',V'}$ are continuous, which implies that $f_{V'}$ is continuous. Analogously, we have that $f_{V}$ is continuous. So, by the universal property of the coproduct topology, we have that $f$ is continuous. So $V\oplus V'$ has the $\kappa'$-codimension topology, for some cardinal $\kappa'$. 

Since $V \oplus 0$ is a subspace of $V$ with the $\kappa$-codimension topology, then, by \textbf{Lemma \ref{subspacetopology1}}, $\kappa' \leqslant \kappa$. Note that $\aleph_{0} \leqslant \kappa'$, since $V\oplus V'$ is a Hausdorff space. Suppose that $\kappa' < \kappa$. Then take $Z < V$ and $Z' < V'$ such that $codim_{V} Z = codim_{V'} Z' = \kappa'$. Then $Z$ is open in $V$ and $Z'$ is open in $V'$, which implies that $Z\oplus Z'$ is open in $V \oplus V'$ (since the coproduct topology is equal to the product topology). We also have that $codim_{V\oplus V'}Z\oplus Z' = codim_{V}Z+codim_{V'}Z' = \kappa'+\kappa' = \kappa'$, which contradict the fact that $V\oplus V'$ has the $\kappa'$-codimension topology. So $\kappa' = \kappa$.
\end{proof}

\begin{cor}Let $V$ be an infinite dimension vector space endowed with the $\kappa$-codimension topology, where $\kappa > 1$. Then there exists a topological decomposition $V = Z \oplus Z'$, where $Z$ and $Z'$ are topologically isomorphic to $V$.
\end{cor}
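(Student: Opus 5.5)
The plan is to split a basis of $V$ into two halves of the same cardinality, apply the preceding proposition to the resulting decomposition, and identify each half with $V$ via Lemma \ref{subspacetopology1} together with a cardinality argument.

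\textbf{Step 1: the algebraic splitting.} Let $B$ be a basis of $V$. Since $B$ is infinite, $\# B = \# B \cdot 2$, so we can write $B = B_{1} \sqcup B_{2}$ with $\# B_{1} = \# B_{2} = \# B = dim \ V$. Put $Z = \langle B_{1}\rangle$ and $Z' = \langle B_{2}\rangle$. Then $V = Z \oplus Z'$ algebraically and $dim \ Z = dim \ Z' = dim \ V$.

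\textbf{Step 2: the topologies on $Z$ and $Z'$.} Give $Z$ and $Z'$ their subspace topologies. By Lemma \ref{subspacetopology1}, $Z$ carries the $\lambda$-codimension topology for some $\lambda \geqslant \kappa$. We want $\lambda = \kappa$ up to the topology it induces. If $\kappa \geqslant dim \ V$, then $V$ is discrete, so $Z$ is discrete, and $Z$ with the $\kappa$-codimension topology is also discrete; hence the two agree. Otherwise $\kappa \leqslant dim \ V = dim \ Z$. In this case we check directly that every open subspace of $Z$ has codimension $<\kappa$ in $Z$. If $U$ is open in $Z$, then $U \supseteq Y \cap Z$ for some $Y < V$ with $codim_{V} Y < \kappa$. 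The map $Z/(Y\cap Z) \to V/Y$ is injective, so $codim_{Z}(Y \cap Z) < \kappa$, and therefore $codim_{Z} U < \kappa$. This shows the subspace topology is exactly the $\kappa$-codimension topology on $Z$, and the same argument applies to $Z'$. Since $dim \ Z = dim \ V$, any linear bijection $Z \to V$ matches the subspaces of codimension $<\kappa$ on both sides, so it is a homeomorphism. Thus $Z \cong V \cong Z'$ topologically.

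\textbf{Step 3: the decomposition is topological.} By the preceding proposition, $Z \oplus Z'$ with the coproduct topology is isomorphic to $Z \oplus Z'$ with the $\kappa$-codimension topology. That proposition's proof shows every linear self-map is continuous, which forces the identity to be a homeomorphism. The latter is exactly $V$ with its given topology. Hence the algebraic identification $Z \oplus Z' \to V$ is a topological isomorphism, i.e.\ $V = Z \oplus Z'$ is a topological decomposition.

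\textbf{Main obstacle.} The delicate point is Step 2: Lemma \ref{subspacetopology1} only gives $\lambda \geqslant \kappa$, so we need the reverse inclusion. The direct estimate of $codim_{Z}(Y\cap Z)$ via the injective map $Z/(Y \cap Z) \to V/Y$ supplies it. We must also handle separately the degenerate case where $\kappa$ exceeds the dimension, where both topologies are discrete.
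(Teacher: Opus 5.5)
Your proof is correct, but it runs in the opposite direction from the paper's. The paper works externally. The preceding proposition gives $V\oplus V$ (coproduct topology) $\cong (V\oplus V,\tau_{V\oplus V,\kappa})$, and since $\dim (V\oplus V)=\dim V$, any linear bijection onto $V$ is a homeomorphism of $\kappa$-codimension spaces. One then takes $Z,Z'$ to be the images of $V\oplus 0$ and $0\oplus V$, which are copies of $V$ by construction, so nothing about subspace topologies inside $V$ is ever needed.

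You instead split a basis internally, and therefore must identify the subspace topology on each half. Your injectivity estimate $Z/(Y\cap Z)\hookrightarrow V/Y$, combined with the complement argument of Lemma \ref{subspacetopology1}, does exactly that. The proposition applied to $Z,Z'$ then identifies the coproduct topology with the topology of $V$. The cleanest justification for that step is that a topology on $Z\oplus Z'$ linearly homeomorphic to the $\kappa$-codimension topology must equal it, because linear automorphisms are homeomorphisms for codimension topologies.

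Your extra step buys something. Step 2 works verbatim for an arbitrary subspace $W<V$, so it proves Proposition \ref{subspacetopology} outright. That avoids the paper's case analysis there, which itself depends on this corollary.

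One small correction: $\kappa\geqslant\dim V$ does not make $V$ discrete when $\kappa=\dim V$. For $\kappa=\dim V=\aleph_0$, the subspace $\{0\}$ has codimension $\aleph_0$, which is not $<\aleph_0$. This is harmless, because your direct estimate never compares $\kappa$ with $\dim V$, so the case split can simply be dropped.
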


\begin{proof}By the last proposition, $V \oplus V$ is isomorphic to $V$, as topological vector spaces. From this fact, the corollary is immediate.
\end{proof}

Now we are able to improve \textbf{Lemma \ref{subspacetopology1}}:

\begin{prop}\label{subspacetopology}Let $V$ be a topological space with the $\kappa$-codimension topology and $W < V$. Then $W$ also has the $\kappa$-codimension topology.
\end{prop}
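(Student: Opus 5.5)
The plan is to show directly that the subspace topology on $W$ and the $\kappa$-codimension topology on $W$ have the same neighborhood filter of $0$. Since both topologies are linear, it suffices to compare their open subspaces. \textbf{Lemma \ref{subspacetopology1}} already gives half of this: $W$ has the $\kappa'$-codimension topology for some $\kappa' \geqslant \kappa$. In particular, every $Z < W$ with $codim_{W} Z < \kappa$ is open in $W$. (Its proof writes $Z = (Z \oplus W') \cap W$ for a complement $W'$ of $W$, and notes that $codim_{V}(Z\oplus W') = codim_{W} Z$.)

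For the reverse inclusion, let $Z$ be an open subspace of $W$ in the subspace topology. By definition there is an open set $O$ of $V$ with $0 \in O$ and $O \cap W \subseteq Z$. Since $V$ has the $\kappa$-codimension topology, $O$ contains a subspace $U < V$ with $codim_{V} U < \kappa$, so $U \cap W \subseteq Z$. Now consider the composite $W \hookrightarrow V \to V/U$. Its kernel is $U \cap W$, so the First Isomorphism Theorem, exactly as in the proof of \textbf{Proposition \ref{preimagesandcodimension}}, gives an injective linear map $W/(U\cap W) \to V/U$. Hence
\[ codim_{W}(U\cap W) \leqslant codim_{V} U < \kappa. \]
Since $U \cap W \subseteq Z$, the quotient $W/Z$ is a quotient of $W/(U\cap W)$, so $codim_{W} Z \leqslant codim_{W}(U\cap W) < \kappa$.

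Together, the two inclusions show that the open subspaces of $W$ are exactly the subspaces of $W$-codimension less than $\kappa$. Therefore $W$ has the $\kappa$-codimension topology. Equivalently, in the language of \textbf{Lemma \ref{subspacetopology1}}, the inequality $\kappa' \leqslant \kappa$ also holds.

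The one point needing care is the bookkeeping with the definition. "Having the $\kappa'$-codimension topology" is determined by the family of open subspaces, and different $\kappa'$ may give the same topology when $\kappa' > dim \ W$, where both are discrete. So I would not try to pin down $\kappa'$ itself. Instead I would argue directly with the generating families of subspaces, which sidesteps that ambiguity; the degenerate case $\kappa = 1$ is immediate, since then $U = V$ and $U \cap W = W$. I do not expect a serious obstacle; the main step is the codimension inequality $codim_{W}(U\cap W) \leqslant codim_{V} U$.
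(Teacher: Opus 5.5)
Your proof is correct, but it takes a different route from the paper.

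\textbf{Your route.} You compare the two neighbourhood bases of $0$ in $W$ directly. The subspace topology has base $\{U\cap W : codim_{V} U<\kappa\}$. The inequality $codim_{W}(U\cap W)\leqslant codim_{V}U$ is just \textbf{Proposition \ref{preimagesandcodimension}} applied to the inclusion $W\hookrightarrow V$. Combined with the complement trick $Z=(Z\oplus W')\cap W$, where $W'$ is a complement of $W$ in $V$, it shows that this base and $\{Z<W: codim_{W}Z<\kappa\}$ generate the same filter.

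\textbf{The paper's route.} The paper starts from the full \textbf{Lemma \ref{subspacetopology1}}. The first half of that lemma uses \textbf{Theorem \ref{theoremone}}, via extending automorphisms of $W$ to $V$, to know in advance that $W$ carries some $\kappa'$-codimension topology with $\kappa'\geqslant\kappa$. The paper then excludes $\kappa'>\kappa$ by three cases:
\begin{enumerate}
\item $codim_{V}W<\kappa$, where $W$ is open in $V$;
\item $\dim W<\kappa$, where $W$ is discrete;
\item $\dim W=codim_{V}W=\kappa$, where the decomposition $V=Z\oplus Z'$ with $Z\cong V$ from the preceding corollary shows that $W$ is even topologically isomorphic to $V$.
\end{enumerate}

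\textbf{Comparison.} The third case gives extra structural information that your argument does not, though the statement does not need it. Your route is more elementary: it needs neither the classification theorem nor the coproduct result, and it treats every $W$ uniformly. In particular, it covers configurations that fall outside the paper's three cases as written. Examples are $\kappa=\aleph_{0}$, $\dim W=\aleph_{0}$, $codim_{V}W=\aleph_{1}$, or any $W$ with $\dim W>\kappa$ and $codim_{V}W\geqslant\kappa$.

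Your caution about not pinning down $\kappa'$ is well placed, since every $\kappa'>\dim W$ gives the same discrete topology on $W$. For the same reason, your aside ``$\kappa'\leqslant\kappa$ also holds'' should be read as ``$\kappa'$ may be taken equal to $\kappa$''.
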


\begin{proof}By \textbf{Lemma \ref{subspacetopology1}}, we have that W has the $\kappa'$-codimension topology, for some cardinal $\kappa' \geqslant \kappa$.

Suppose that $codim_{V} W < \kappa$. Then $W$ is open in $V$. Let $Z$ be an open subspace of $W$. Then $Z$ is open in $V$, which implies that $codim_{V} Z < \kappa$. Then $codim_{W}Z \leqslant codim_{V}Z < \kappa$. So $\kappa' = \kappa$.

Suppose that $dim \ W < \kappa$. Since $W$ has the $\kappa'$-codimension topology and $\kappa < \kappa'$, then $W$ is discrete, which implies that $W$ also has the $\kappa$-codimension topology.

Suppose that $dim \ W = \kappa$ and $codim_{V} W = \kappa$. Then there exists $W' < V$ such that $V = W \oplus W'$ as vector spaces and $dim \ W' = \kappa$. By the last corollary, there exists $Z < V$ and $Z'< V$ such that $V = Z \oplus Z'$ and $Z$ and $Z'$ are topologically isomorphic to $V$. We have that $dim \ V = dim \ W = dim \ Z = dim \ W' = dim \ Z'$, which implies that there exists an isomorphism $f: V \rightarrow V$, which is a homeomorphism, such that $f(W) = Z$ and $f(W') = Z'$. So $W$ is isomorphic to $Z$ as topological vector spaces and $Z$ is isomorphic to $V$ as topological vector spaces, which implies that $W$ is isomorphic to $V$ as topological vector spaces. Since $V$ has the $\kappa$-codimension topology, then $W$ has the $\kappa$-codimension topology.
\end{proof}

As a special case, we have:

\begin{cor}Let $V$ be a topological space with the $\kappa$-codimension topology and $W < V$. If $dim \ W < \kappa$, then $W$ is discrete.
\end{cor}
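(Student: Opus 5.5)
The plan is to deduce the corollary directly from \textbf{Proposition \ref{subspacetopology}}, together with the earlier remark that the $\kappa$-codimension topology on a space of dimension at most $\kappa$ is discrete.

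First, by \textbf{Proposition \ref{subspacetopology}}, the subspace $W$, with the topology induced from $V$, carries the $\kappa$-codimension topology. So its neighborhood filter of $0$ is generated by the subspaces $Z < W$ with $codim_{W} Z < \kappa$.

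Next, take $Z = 0$. Then $codim_{W} 0 = dim \ W < \kappa$, so $\{0\}$ is an open subspace of $W$. (If $dim \ W < \kappa$ forces $\kappa > 1$, fine. If $\kappa = 1$, the hypothesis $dim \ W < 1$ gives $W = 0$, which is trivially discrete.)

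Finally, since $W$ is a topological vector space and $\{0\}$ is open, translation shows every singleton $\{w\} = w + \{0\}$ is open. Hence $W$ is discrete.

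There is no real obstacle once \textbf{Proposition \ref{subspacetopology}} is available. The only subtlety is that the case $dim \ W < \kappa$ inside the proof of that proposition was itself handled by noting that $W$ is discrete, so I would make sure the present argument is not circular. It is not: the argument above uses only the conclusion of the proposition together with the definition of $\tau_{W,\kappa}$.
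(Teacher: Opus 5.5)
Your proposal is correct and follows the paper's route: the paper states this corollary as a special case of Proposition~\ref{subspacetopology}, and the point is exactly that $\{0\}$ has codimension $\dim W<\kappa$ in $W$, so it is open. Your non-circularity check is also sound, since the case $\dim W<\kappa$ in that proposition's proof uses only Lemma~\ref{subspacetopology1}, whose second paragraph applied to $Z=0$ already gives the corollary directly.
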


\begin{prop}\label{theyarenotlinearlycompact}Let $V$ be a topological vector space with $\kappa$-codimension topology and $dim \ V \geqslant \aleph_{0}$. Then $V$ is not linearly compact.
\end{prop}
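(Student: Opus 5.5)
We argue by contradiction: assume $V$ has the $\kappa$-codimension topology, $dim \ V \geqslant \aleph_{0}$, and $V$ is linearly compact. We split according to $\kappa$. The idea is that in a codimension topology every codimension-one subspace is open as soon as $\kappa>1$. By \textbf{Proposition \ref{linearcompacthasdense}}, however, an infinite dimensional linearly compact space always has a non-open subspace of codimension $1$.

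\emph{Case $\kappa > 1$.} Then $\kappa \geqslant \aleph_{0}$, so every subspace $W < V$ with $codim \ W = 1$ satisfies $codim \ W < \kappa$. Hence $W$ belongs to the generating family of the neighborhood filter of $0$, and so $W$ is open. On the other hand, $V$ is infinite dimensional and linearly compact, so \textbf{Proposition \ref{linearcompacthasdense}} provides a subspace $W$ with $codim \ W = 1$ that is not open. This contradicts the previous sentence.

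\emph{Case $\kappa = 1$.} Here $V$ carries the trivial topology, and linear compactness of $V$ means $V$ is an inverse limit of finite dimensional (discrete) spaces. In the proof of \textbf{Proposition \ref{linearcompacthasdense}}, Proposition 24.4 of \cite{BeH} gives a topological isomorphism $V \cong \prod_{i\in\Gamma} k$ with $\Gamma$ infinite. That product topology is Hausdorff, since each factor is discrete. The trivial topology on a nonzero space is not Hausdorff, which is a contradiction. Alternatively, the projection onto one coordinate factor is a nonzero continuous linear map to the discrete $k$. Its kernel would then be an open proper subspace, whereas the trivial topology has $V$ as its only open subspace. Either way, this case is also impossible.

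The only delicate point I expect is the case $\kappa = 1$, because \textbf{Proposition \ref{linearcompacthasdense}} by itself gives no contradiction there: in the trivial topology non-open subspaces are expected. This is why that case is handled through the Hausdorff property, or equivalently the open kernels, of the product $\prod k$. The case $\kappa > 1$ is a direct application of the definition of the $\kappa$-codimension topology together with \textbf{Proposition \ref{linearcompacthasdense}}.
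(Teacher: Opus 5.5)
Your proof is correct and follows the paper's argument. For $\kappa>1$ every codimension-one subspace is open, which contradicts \textbf{Proposition \ref{linearcompacthasdense}}; for $\kappa=1$ the trivial topology is not Hausdorff while linearly compact spaces are, and your coordinate-projection variant is just a rephrasing of that same Hausdorff obstruction.
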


\begin{proof}Every infinite dimensional linearly compact space has subspaces with codimension $1$ that are not open (by \textbf{Proposition \ref{linearcompacthasdense}}), which implies that $V$ is not linearly compact, for every $\kappa \neq 1$. If $\kappa = 1$, then $V$ is not linearly compact either, since linearly compact spaces are Hausdorff and $V$ is not. 
\end{proof}

\begin{prop}Let $V$ be a topological space with $\aleph_{0}$-codimension topology and $dim \ V \geqslant \aleph_{0}$. Then the Cauchy completion $\hat{V}$ is linearly compact. 
\end{prop}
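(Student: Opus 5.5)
By the definition of the Cauchy completion, $\hat{V} = \lim\limits_{\longleftarrow}\{V/W : W \text{ open subspace of } V\}$, with the inverse limit topology. To show that $\hat{V}$ is linearly compact, I will exhibit it as an inverse limit of finite dimensional vector spaces, topologized as such.

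First I identify the index set. With the $\aleph_{0}$-codimension topology, the open subspaces forming the neighborhood base of $0$ are exactly the subspaces $W < V$ with $codim \ W < \aleph_{0}$. So every quotient $V/W$ in the system is a finite dimensional vector space.

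Next I check the directedness and the topology on the factors.
\begin{itemize}
\item By Proposition \ref{codimensionofintersection}, this family is closed under finite intersections, so the index set is directed under reverse inclusion.
\item The transition maps are the canonical projections $V/W' \rightarrow V/W$ for $W' < W$.
\item Each $V/W$ carries the quotient topology. Since $W$ is open, this is the discrete topology, which is the standard topology on a finite dimensional space.
\end{itemize}
Hence $\hat{V}$ is, by construction, an inverse limit of discrete finite dimensional spaces with the limit topology. That is precisely the definition of linear compactness.

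The only step needing care is the topology on the factors. The completion's definition uses all open subspaces, not just those of finite codimension, so I must confirm there are no others. This is immediate, because a subspace containing a basic open $W$ also has finite codimension, and openness forces it to contain such a $W$. The topology of the linearly compact limit is then the one induced from $\prod_W V/W$ with discrete factors, matching the topology in the definition of $\hat{V}$.

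Finally, for context, $\hat{V}$ can be identified with $\prod_{i \in \Gamma} k$ for a basis of the algebraic dual. This is consistent with Proposition \ref{linearcompacthasdense}, and it is consistent with Proposition \ref{theyarenotlinearlycompact}, since $V$ itself is a non-closed dense subspace of the completion.
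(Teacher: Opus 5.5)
Your proof is correct and follows essentially the same approach as the paper: you identify the open subspaces of $V$ as exactly the subspaces of finite codimension, so that $\hat{V}$ is an inverse limit of finite dimensional quotients $V/W$ and hence linearly compact. Your extra checks (directedness via Proposition~\ref{codimensionofintersection}, discreteness of each $V/W$, and the fact that every open subspace has finite codimension) fill in details the paper leaves implicit.
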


\begin{proof}We have that $\hat{V} \cong \lim\limits_{\longleftarrow}\{V/W: W< V \text{ is open}\}$. But $\{V/W: W \text{ is open} \} = \{V/W: codim \ W < \aleph_{0}\}$, which implies that $\hat{V}$ is an inverse limit of finite dimensional vector spaces, which implies that it is linearly compact.
\end{proof}

\begin{prop}\label{bigfamily}Let $V$ be a vector space. The family $\mathcal{V} = \{V_{\kappa} =(V,\tau_{V,\kappa}): \aleph_{0} \leqslant \kappa \leqslant dim \ V^{+}\}$ satisfies the following properties:

\begin{enumerate}
    \item If $\aleph_{0} \leqslant \kappa, \kappa' \leqslant dim \ V^{+}$ and $\kappa \neq \kappa'$, then $V_{\kappa}$ and $V_{\kappa'}$ are not isomorphic as topological vector space.
    \item If $\aleph_{0} \leqslant \kappa, \kappa' \leqslant dim \ V^{+}$, then the profinite-dimensional completion of $V_{\kappa}$ and the profinite-dimensional completion of $V_{\kappa'}$ are isomorphic as topological vector spaces.
    \item If $\aleph_{0} \leqslant \kappa, \kappa' \leqslant dim \ V^{+}$, then the dual spaces $V_{\kappa}^{\circledast}$ and $V_{\kappa'}^{\circledast}$ are isomorphic as topological vector spaces.
\end{enumerate}
\end{prop}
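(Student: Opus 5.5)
Item 1 follows directly from \textbf{Proposition \ref{differentspaces}}. That proposition assumes $\kappa \leqslant \kappa' \leqslant dim \ V$, while here $\kappa'$ may be as large as $dim \ V^{+}$. So first I will observe that $\tau_{V,\kappa}$ is determined by $\min\{\kappa, dim \ V^{+}\}$, and that the argument of \textbf{Proposition \ref{differentspaces}} works whenever there is a subspace $W'$ with $\kappa \leqslant codim \ W' < \kappa'$. For $\kappa < \kappa' \leqslant dim \ V^{+}$ we have $\kappa \leqslant dim \ V$, so we may take $W'$ spanned by all but $\kappa$ elements of a basis. Then $codim \ W' = \kappa < \kappa'$. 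This is exactly what the argument needs, and it covers the endpoint $dim \ V^{+}$, which is the discrete topology.

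For item 2, the plan is to show that the index system $\{V/W : W \text{ closed}, codim \ W < \aleph_{0}\}$ is literally the same for every $\kappa \geqslant \aleph_{0}$. Since $\kappa > 1$, the proposition proved earlier in this section shows that every subspace of $V_{\kappa}$ is closed. Hence the indexing set is $\{W < V : codim \ W < \aleph_{0}\}$ for every such $\kappa$. The quotients $V/W$ are finite dimensional, carry the discrete topology, and have the same transition maps. The inverse limits therefore coincide, both as sets and as topological spaces, and the identity is a topological isomorphism.

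For item 3, I will argue that $V_{\kappa}^{\circledast}$ is the full algebraic dual $Hom(V,k)$ with a topology independent of $\kappa$. Continuity of every functional follows from \textbf{Proposition \ref{continuitytosmallspaces}} with $V' = k$, since $dim \ k = 1 < \aleph_{0} \leqslant \kappa$. Alternatively, the kernel of a functional has codimension at most $1$ and is therefore open. For the topology, the subbasic neighborhoods are $W^{\perp}$ with $W$ a linearly compact subspace of $V_{\kappa}$. By \textbf{Proposition \ref{subspacetopology}}, $W$ carries the $\kappa$-codimension topology. If $dim \ W$ is infinite, \textbf{Proposition \ref{theyarenotlinearlycompact}} shows that $W$ is not linearly compact. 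If $dim \ W$ is finite, then $W$ is discrete and Hausdorff, hence linearly compact, being an inverse limit of itself. So the linearly compact subspaces are exactly the finite dimensional ones for every $\kappa$. The topology on the dual is then the finite topology (pointwise convergence) for every $\kappa$, and the identity map on $Hom(V,k)$ gives the isomorphism.

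I expect the main obstacle to be the bookkeeping in item 3. Applying \textbf{Proposition \ref{subspacetopology}} to $W$ requires care when $dim \ W < \kappa$, where $W$ is discrete, and for $\kappa = dim \ V^{+}$, where $V$ itself is discrete. The finite-dimensional case has to be checked against the definition of linear compactness in the inverse-limit sense, and the infinite-dimensional case has to be ruled out uniformly in $\kappa$. The remaining steps only unpack the definitions.
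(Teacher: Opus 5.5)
Your proposal is correct and follows essentially the same route as the paper: \textbf{Proposition \ref{differentspaces}} for item 1, the fact that subspaces of finite codimension are closed (and open, so each $V/W$ is discrete) for item 2, and \textbf{Propositions \ref{continuitytosmallspaces}, \ref{subspacetopology}} and \textbf{\ref{theyarenotlinearlycompact}} for item 3. Your extra checks fill in small steps the paper leaves implicit: you verify that the endpoint $\kappa' = dim \ V^{+}$ is still covered, even though it lies outside the stated hypothesis $\kappa' \leqslant dim \ V$ of \textbf{Proposition \ref{differentspaces}}, and that finite-dimensional subspaces are genuinely linearly compact.
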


\begin{obs}So we can construct arbitrarily big families satisfying these three properties.
\end{obs}

\begin{proof}The spaces of $\mathcal{V}$ are pairwise non isomorphic as topological vector spaces, by \textbf{Proposition \ref{differentspaces}}. 

We have that the profinite-dimensional completion of $V_{\kappa}$ is given by the inverse limit $\lim\limits_{\longleftarrow}\{V/W: W < V, codim \ W < \aleph_{0} \}$, since all subspaces with finite codimension are closed in $V_{\kappa}$. Note that $V/W$ is discrete, since all subspaces with finite codimension are also open in $V$. So, this inverse limit does not depend on the choice of $\kappa$, which implies that the profinite-dimensional completions of all $V_{\kappa}$ are isomorphic. 

As vector spaces, $V_{\kappa}^{\circledast} = CHom(V_{\kappa},k)$. By \textbf{Proposition \ref{continuitytosmallspaces}}, we have that $CHom(V_{\kappa},k) = Hom(V_{\kappa},k)$, which implies that the dual, as a vector space, does not depend on the choice of $\kappa$. The topology of $V_{\kappa}^{\circledast}$ is generated by the set $\{W^{\perp}: W < V, W \text{ is linearly compact}\}$. By \textbf{Proposition \ref{subspacetopology}}, all subspaces of $V$ also have the $\kappa$-codimension topology and, by \textbf{Proposition \ref{theyarenotlinearlycompact}} these subspaces cannot be linearly compact, unless they have finite dimension, which implies that $\{W^{\perp}: W < V, W \text{ is linearly compact}\} = \{W^{\perp}: W < V, dim \ W < \aleph_{0}\}$. Thus the topology of $V_{\kappa}^{\circledast}$ also does not depend of $\kappa$.
\end{proof}

As a consequence, we get:

\begin{cor}Let $V$ be a vector space with the $\kappa$-codimension topology, where $\kappa \neq 1$. Then $V^{\circledast}$ is linearly compact.
\end{cor}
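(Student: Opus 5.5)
The corollary follows from the proof of Proposition \ref{bigfamily}, which computes $V^{\circledast}$ explicitly. By Proposition \ref{continuitytosmallspaces}, applied with $V' = k$ (so $dim \ V' = 1 < \kappa$, because $\kappa \neq 1$ forces $\kappa \geqslant \aleph_{0}$), we have $V^{\circledast} = Hom(V,k)$ as vector spaces, i.e. the full algebraic dual $V^{*}$. The same proof shows that the neighborhood filter of $0$ in $V^{\circledast}$ is generated by $\{W^{\perp}: W < V, dim \ W < \aleph_{0}\}$. This rests on Proposition \ref{subspacetopology} and Proposition \ref{theyarenotlinearlycompact}, together with the fact that finite-dimensional subspaces are discrete here (previous corollary), hence linearly compact. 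So $V^{\circledast}$ is $V^{*}$ with the topology of pointwise convergence on finite subsets, the weak$^{*}$ topology. It remains to show this space is linearly compact in the sense of the paper's definition.

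Fix a basis $B$ of $V$. The map $\Phi: V^{*} \rightarrow \prod_{b \in B} k$, $f \mapsto (f(b))_{b\in B}$, is a linear bijection, since a linear functional is determined freely by its values on a basis. I will check that $\Phi$ is a homeomorphism, where $\prod_{b\in B} k$ carries the product of discrete topologies, i.e. the inverse limit of the finite-dimensional discrete spaces $k^{F}$, $F \subseteq B$ finite.

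\emph{Continuity of $\Phi$.} A basic open subspace of the product is $\{x: x_{b} = 0, b \in F\}$ with $F$ finite. Its preimage under $\Phi$ is $\langle F\rangle^{\perp}$, which is a generator of the topology of $V^{\circledast}$.

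\emph{Openness of $\Phi$.} Let $W < V$ be finite dimensional. Then $W$ lies in $\langle F\rangle$ for some finite $F \subseteq B$, so $\langle F\rangle^{\perp} \subseteq W^{\perp}$. Hence $\Phi(W^{\perp})$ contains the open subspace $\Phi(\langle F\rangle^{\perp})$ and is therefore open, because a subspace with nonempty interior is open (Corollary 2.3 of \cite{Wa}).

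Thus $V^{\circledast} \cong \prod_{b\in B} k = \lim\limits_{\longleftarrow} k^{F}$, an inverse limit of finite-dimensional spaces, which is linearly compact. The case $dim \ V < \aleph_{0}$ is included: the product is then finite dimensional and discrete.

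The main obstacle is making sure the description of the topology of $V^{\circledast}$ is correct. That is, linearly compact subspaces of $V_{\kappa}$ must be exactly the finite-dimensional ones. One direction uses that finite-dimensional discrete spaces are linearly compact; the other uses Propositions \ref{subspacetopology} and \ref{theyarenotlinearlycompact}. Once this is in place, the identification with the product is routine.
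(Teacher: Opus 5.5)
Your proof is correct and takes essentially the paper's route. Both arguments rest on the computation in the proof of \textbf{Proposition \ref{bigfamily}}: $V^{\circledast}$ is the full algebraic dual, with the topology generated by annihilators of finite-dimensional subspaces, independently of $\kappa$. The paper then recognizes this as the dual of the discrete space $(V,\tau_{V,dim \ V^{+}})$ and cites Proposition 24.4 of \cite{BeH}, whereas you check directly that it is $\prod_{b\in B}k$. Your version is therefore self-contained, and it also covers finite-dimensional $V$ explicitly, where $dim \ V^{+}$ is finite and so not an admissible cardinal for the paper's reduction.
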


\begin{proof}Let $V'$ be the same vector space of $V$ but with the $\kappa'$-codimension topology, where $\kappa' = dim \ V^{+}$. By the last proposition we have that $V^{\circledast}$ is isomorphic, as topological vector spaces, to $V'^{\circledast}$. But $V'$ is discrete, which implies that $V'^{\circledast}$ is linearly compact (Proposition 24.4 of \cite{BeH}). Thus $V^{\circledast}$ is linearly compact.
\end{proof}

\section{Some remarks about algebras}

In Functional Analysis there is the following famous result:

\begin{prop}(Johnson, Theorem 5.1.5 of \cite{Dal}) Let $A$ and $B$ be Banach algebras such that $B$ is semisimple. If $f: A \rightarrow B$ is an epimorphism of algebras, then $f$ is continuous. 
\end{prop}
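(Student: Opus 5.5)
The plan is to use the closed graph theorem and reduce to showing that the separating space
$$\mathfrak{S}(f) = \{ b \in B : \exists (a_{n}) \subseteq A,\ a_{n} \to 0,\ f(a_{n}) \to b\}$$
is zero. This is a closed subspace of $B$, and by the closed graph theorem (both spaces are Banach), $f$ is continuous if and only if $\mathfrak{S}(f) = 0$.

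Since $f$ is surjective, $\mathfrak{S}(f)$ is a two-sided ideal of $B$. Indeed, for $c = f(x) \in B$ and $a_{n} \to 0$ with $f(a_{n}) \to b$, we have $x a_{n} \to 0$ and $f(x a_{n}) = c f(a_{n}) \to cb$, and similarly on the right. As $B$ is semisimple, $rad\,B = \bigcap \{ \text{primitive ideals of } B\} = 0$. So it suffices to show that $\mathfrak{S}(f)$ annihilates every irreducible left $B$-module $X$, since then $\mathfrak{S}(f) \subseteq rad\,B = 0$.

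Fix an irreducible left $B$-module $X$. Pulling back along $f$, $X$ becomes an algebraically irreducible left $A$-module, because $f$ is onto. I would then invoke Johnson's uniqueness-of-norm results for irreducible modules over Banach algebras, in three steps:
\begin{enumerate}
\item Writing $X \cong A/L$ for a maximal modular left ideal $L$ of $A$, which is closed, $X$ carries a Banach norm for which $A$ acts continuously.
\item In the same way $X \cong B/L'$ carries a Banach norm for which $B$ acts continuously.
\item Any two complete norms on an irreducible module making the action continuous are equivalent.
\end{enumerate}
This last step is Johnson's lemma. It is proved by showing that every linear map commuting with the action of the algebra is automatically continuous, via a separating-space argument on $X$ together with Schur's lemma. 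Applying step 3 to the two module structures, the identity of $X$ is a homeomorphism between the two norms.

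Now let $b \in \mathfrak{S}(f)$ with $a_{n} \to 0$ and $f(a_{n}) \to b$, and let $\xi \in X$. Continuity of the $A$-action gives $a_{n}\cdot\xi = f(a_{n})\xi \to 0$. Continuity of the $B$-action, in the equivalent norm, gives $f(a_{n})\xi \to b\xi$. Hence $b\xi = 0$, so $b$ annihilates $X$. Therefore $\mathfrak{S}(f) \subseteq \bigcap \{\text{primitive ideals}\} = rad\,B = 0$, and $f$ is continuous.

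The main obstacle is step 3, the uniqueness of the Banach module norm on an irreducible module. I would prove it by considering the identity map $X \to X$ between the two norms and its separating space $\mathfrak{S}$. This $\mathfrak{S}$ is an $A$-submodule of $X$, so by irreducibility it is $0$ or $X$. The case $\mathfrak{S} = X$ is excluded by a gliding-hump or Baire argument, using Schur's lemma: the commutant of the action is a normed division algebra, hence $\mathbb{C}$ by Gelfand–Mazur. If this turns out to be delicate, the fallback is Aupetit's spectral proof. It uses $r(f(a)) \leqslant r(a)$ for surjective $f$ and Vesentini's subharmonicity of $\lambda \mapsto r(c+\lambda b)$ to show $r(cb) = 0$ for all $c \in B$ whenever $b \in \mathfrak{S}(f)$, which again gives $b \in rad\,B$.
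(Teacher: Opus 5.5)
The paper gives no proof of this statement. It is quoted from Dales (Theorem 5.1.5) only as motivation for Examples~\ref{example1} and~\ref{example2}, so your argument has to stand on its own. Its outer layer is the standard one and is correct:
$\mathfrak{S}(f)$ is a closed two-sided ideal because $f$ is onto; it suffices to show that it annihilates every simple left $B$-module; and your last paragraph correctly gets $b\xi=0$ once the two norms on $X$ are known to be equivalent.

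\textbf{The gap is step~3.} It carries the whole content of the theorem, and your sketch does not prove it.

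First, step~3 must be the version where each $a\in A$ merely acts boundedly. In $\|\cdot\|_2$ you only know $\|f(a)\xi\|_2\leqslant\|f(a)\|\,\|\xi\|_2$. That gives no bound in terms of $\|a\|$ unless $f$ is already continuous, so a jointly continuous version would be circular.

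Second, for the bounded-action version, the dichotomy $\mathfrak{S}\in\{0,X\}$ for the identity $(X,\|\cdot\|_1)\to(X,\|\cdot\|_2)$ is the easy half. Excluding $\mathfrak{S}=X$ is Johnson's theorem itself, and ``a gliding-hump or Baire argument using Schur's lemma'' does not carry it out:
\begin{itemize}
\item Knowing that the commutant is $\mathbb{C}$ tells you that module endomorphisms of a single normed module are scalars.
\item The identity between two different norms is trivially such a scalar, so this fact alone says nothing about its continuity.
\item Likewise, ``every linear map commuting with the action is continuous'' merely restates step~3.
\end{itemize}
An analytic input exploiting the completeness of $A$ is needed at this point, and it is missing.

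\textbf{A repair.} The most economical fix is to actually carry out your spectral fallback, which avoids modules altogether.
\begin{enumerate}
\item Take $b\in\mathfrak{S}(f)$, a sequence $x_n\to0$ with $f(x_n)\to b$, and $y\in A$ with $f(y)=b$. Put $F_n(\lambda)=f(x_n)+\lambda\,(b-f(x_n))=f\bigl(x_n+\lambda(y-x_n)\bigr)$, so that $F_n(1)=b$.
\item Since $r(f(z))\leqslant r(z)\leqslant\|z\|$ for every algebra homomorphism, $r(F_n(\lambda))\leqslant\|x_n\|(1+|\lambda|)+|\lambda|\,\|y\|$. This is $O(\rho_n)$ on $|\lambda|=\rho_n$ when $\|x_n\|\leqslant\rho_n$.
\item Trivially $r(F_n(\lambda))\leqslant\|f(x_n)\|+|\lambda|\,\|b-f(x_n)\|$. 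This is $O(1)$ on $|\lambda|=R_n$ when $R_n\leqslant\|b-f(x_n)\|^{-1}$.
\item Choose $\rho_n\to0$ and $R_n\to\infty$ subject to these constraints. Apply Hadamard's three-circles inequality to the subharmonic function $\log r(F_n(\cdot))$ (Vesentini). With $a_n=\log R_n\to\infty$ and $c_n=\log(1/\rho_n)\to\infty$, this yields $\log r(b)\leqslant-\frac{a_nc_n}{a_n+c_n}+O(1)$, so $r(b)=0$.
\item Hence $\mathfrak{S}(f)$ is an ideal of quasinilpotent elements. So $r(cb)=0$ for all $c\in B$, which gives $b\in\mathrm{rad}\,B=0$, and the closed graph theorem finishes the proof.
\end{enumerate}
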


However, the result above does not hold for the class of topological algebras with linear topology:

\begin{ex}\label{example1}Let $A$ be a non-discrete algebra with linear topology and $A_{dis}$ the same algebra with the discrete topology. Then the identity map ${id: A \rightarrow A_{dis}}$ is an epimorphism of algebras that is not continuous.
\end{ex}

\begin{defi}Let $k$ be a field, treated as a discrete topological ring. A pseudocompact $k$-algebra is a complete Hausdorff topological algebra possessing a fundamental system of neighborhoods of 0 consisting of two sided ideals with finite codimension that intersect in 0.
\end{defi}

If we consider only pseudocompact algebras, then Johnson's Theorem also does not hold:

\begin{ex}\label{example2}If $A$ is a pseudocompact algebra and $I$ is a maximal ideal that is dense on $A$ and $codim \ I = 1$ (see page 5 of \cite{IM} for an example), then  $A/I$ is isomorphic to $k$ as vector spaces. Let $\pi: A \rightarrow k$ be an algebra homomorphism such that $\ker \ \pi = I$ and where $k$ is discrete. We have that $\pi$ is an epimorphism and $k$ is simple, but this map is not continuous since $0$ is open in $k$ but $I = \pi^{-1}(0)$ is not open in $A$.
\end{ex}

\end{document}